\documentclass[onefignum,onetabnum]{siamart220329}

\usepackage{amsfonts}
\usepackage{amsmath}
\usepackage{amssymb}
\usepackage{mathtools}
\usepackage{graphicx}
\usepackage{epstopdf}
\usepackage[ruled,vlined,algo2e]{algorithm2e}
\usepackage{bbm}
\usepackage{lscape}
\usepackage{yfonts}
\usepackage{verbatim}
\usepackage{xcolor}
\usepackage{hyperref}
\usepackage{subcaption}

\ifpdf
  \DeclareGraphicsExtensions{.eps,.pdf,.png,.jpg}
\else
  \DeclareGraphicsExtensions{.eps}
\fi


\DeclareSymbolFont{lettersA}{U}{txmia}{m}{it}
\DeclareMathSymbol{\R}{\mathord}{lettersA}{"92}
\DeclareMathSymbol{\C}{\mathord}{lettersA}{"83}

\newcommand{\bA}{{\bf A}}
\newcommand{\bb}{{\bf b}}
\newcommand{\bc}{{\bf c}}
\newcommand{\bC}{{\bf C}}

\newcommand{\tol}{\ensuremath{\textsc{tol}}}

\definecolor{ForestGreen}{rgb}{0.18,0.70,0.18}

\allowdisplaybreaks

\newsiamremark{remark}{Remark}
\newsiamremark{hypothesis}{Hypothesis}
\crefname{hypothesis}{Hypothesis}{Hypotheses}
\newsiamthm{claim}{Claim}

\headers{Implicit-Explicit Multirate Infinitesimal Stage-Restart Methods}{A.~C.~Fish, D.~R.~Reynolds, and S.~B.~Roberts}

\title{Implicit-Explicit Multirate Infinitesimal Stage-Restart Methods \thanks{Submitted to the editors DATE.
\funding{The work of Alex Fish and Daniel Reynolds was supported by the U.S. Department of Energy, Office of Science, Office of Advanced Scientific Computing Research, Scientific Discovery through Advanced Computing (SciDAC) Program through the FASTMath Institute, under DOE award DE-SC0021354.\\
The work of Steven Roberts was performed under the auspices of the U.S. Department of Energy by Lawrence Livermore National Laboratory under Contract DEAC52-07NA27344 and was supported by the U.S. Department of Energy, Office of Science, Office of Advanced Scientific Computing Research. LLNL-JRNL-843436.\\
This document was prepared as an account of work sponsored by an agency of the United States government. Neither the United States government nor Lawrence Livermore National Security, LLC, nor any of their employees makes any warranty, expressed or implied, or assumes any legal liability or responsibility for the accuracy, completeness, or usefulness of any information, apparatus, product, or process disclosed, or represents that its use would not infringe privately owned rights. Reference herein to any specific commercial product, process, or service by trade name, trademark, manufacturer, or otherwise does not necessarily constitute or imply its endorsement, recommendation, or favoring by the United States government or Lawrence Livermore National Security, LLC. The views and opinions of authors expressed herein do not necessarily state or reflect those of the United States government or Lawrence Livermore National Security, LLC, and shall not be used for advertising or product endorsement purposes.}}}

\author{
Alex C.~Fish\thanks{Department of Mathematics, Southern Methodist University, Dallas, TX, USA (\email{afish@smu.edu}, \email{reynolds@smu.edu}).}
\and
Daniel R.~Reynolds\footnotemark[2]
\and
Steven B.~Roberts\thanks{Center for Applied Scientific Computing, Lawrence Livermore National Laboratory, Livermore, CA, USA (\email{roberts115@llnl.gov}).}
}


\makeatletter
\newcommand*{\addFileDependency}[1]{
  \typeout{(#1)}
  \@addtofilelist{#1}
  \IfFileExists{#1}{}{\typeout{No file #1.}}
}
\makeatother

\ifpdf
\hypersetup{
  pdftitle={Implicit-Explicit Multirate Infinitesimal Stage-Restart Methods},
  pdfauthor={A.~C. Fish, D.~R. Reynolds, and S.~J.~Roberts}
}
\fi


\begin{document}

\maketitle
\begin{abstract}
Implicit-Explicit (IMEX) methods are flexible numerical time integration methods which solve an initial-value problem (IVP) that is partitioned into stiff and nonstiff processes with the goal of lower computational costs than a purely implicit or explicit approach. A complementary form of flexible IVP solvers are multirate infinitesimal methods for problems partitioned into fast- and slow-changing dynamics, that solve a multirate IVP by evolving a sequence of ``fast'' IVPs using any suitably accurate algorithm. This article introduces a new class of high-order implicit-explicit multirate methods that are designed for multirate IVPs in which the slow-changing dynamics are further partitioned in an IMEX fashion. This new class, which we call implicit-explicit multirate stage-restart (IMEX-MRI-SR), both improves upon the previous implicit-explicit multirate generalized-structure additive Runge Kutta (IMEX-MRI-GARK) methods, and extends multirate exponential Runge Kutta (MERK) methods into the IMEX context. We leverage GARK theory to derive conditions guaranteeing orders of accuracy up to four. We provide second-, third-, and fourth-order accurate example methods and perform numerical simulations demonstrating convergence rates and computational performance in both fixed-step and adaptive-step settings.
\end{abstract}

\begin{keywords}
  multirate time integration, initial-value problems, implicit-explicit methods
\end{keywords}

\begin{AMS}
  65L20, 65L05, 65L06
\end{AMS}


\section{Introduction}
\label{sec:intro}

Flexible time integration methods for solving systems of initial-value problems (IVPs) have seen growing interest in recent years, largely due to their ability to provide highly accurate approximations of the IVP solution with increased computational efficiency. These integrators strive to reduce computational costs by partitioning the IVP into different components, and then treating each using different step sizes or numerical methods.  Some of the primary families of flexible methods include implicit-explicit (IMEX) partitioning \cite{calvo2001linearly,Kennedy2003ark,Kennedy2019ark,Sandu2015}, linear-nonlinear partitioning \cite{hochbruckexprk,luan_new_2020,luan_multirate_2021,Luan2014,Luan2016}, and multirate  partitioning \cite{gunther_multirate_2016,sandu_class_2019,sarshar_design_2019,Wensch2009}.

IMEX time integration methods solve IVPs in which the right-hand side function $f(t,y(t))$ is additively split into stiff $\{I\}$ and nonstiff $\{E\}$ processes,
\begin{equation}
    \label{eq:imexproblem}
    \begin{split}
    y'(t) &= f(t,y) := f^{\{I\}}(t,y)+f^{\{E\}}(t,y), \quad t \ge t_0,\\
    y(t_0) &= y_0.
    \end{split}
\end{equation}
IMEX methods then couple two different numerical methods to treat these components: $f^{\{I\}}$ typically uses a stiff but computationally expensive solver, whereas $f^{\{E\}}$ may use a cheaper but nonstiff solver.  For example, additive Runge--Kutta (ARK) methods typically combine an A-stable diagonally-implicit Runge--Kutta (DIRK) method with an explicit Runge--Kutta (RK) method.

Similarly, multirate methods solve IVPs in which the right-hand side is additively partitioned into rapidly and slowly evolving dynamics, $\{F\}$ and  $\{S\}$,
\begin{equation}
    \label{eq:multirateproblem}
    \begin{split}
    y'(t) &= f(t,y) := f^{\{F\}}(t,y)+f^{\{S\}}(t,y), \quad t \ge t_0,\\
    y(t_0) &= y_0.
    \end{split}
\end{equation}
Multirate methods then apply numerical methods with different step sizes for each component to save on computation time while retaining a desired level of accuracy.

In this work we combine the above approaches to consider a three-way additively partitioned IVP, wherein the slow partition $f^{\{S\}}$ from \eqref{eq:multirateproblem} is split in an IMEX fashion,
\begin{equation}
    \label{eq:imexmriproblem}
    \begin{split}
    y'(t) &= f^{\{F\}}(t,y) + f^{\{I\}}(t,y) + f^{\{E\}}(t,y),\quad t \ge t_0,\\
    y(t_0) &= y_0.
    \end{split}
\end{equation}
In particular, we add a new class of methods to the ever-growing family of multirate infinitesimal (MRI) methods.  These approximate the solution to \eqref{eq:multirateproblem} or \eqref{eq:imexmriproblem} through solving a sequence of ``fast'' IVPs,
\begin{equation}
    \label{eq:infinitesimalmethod}
    \begin{split}
    v'_i(\theta) &= f^{\{F\}}(\theta,v_i) + g_i(\theta),\quad \theta\in[\theta_{0,i},\theta_{f,i}],\\ v(\theta_{0,i}) &= v_{0,i}.
    \end{split}
\end{equation}
The forcing functions $g_i(\theta)$ incorporate information from the slow dynamics defined by $f^{\{I\}}$ and $f^{\{E\}}$ in a manner defined by the method.  MRI methods assume that these fast IVPs \eqref{eq:infinitesimalmethod} are solved exactly, but in practice these are approximated using an additional ``inner'' numerical method with a smaller step size than the multirate method.  This inner method can itself further decompose the problem through IMEX, linear-nonlinear, or multirate approaches.

Our proposed class of methods is called \emph{Implicit-Explicit Multirate Infinitesimal Stage-Restart} (IMEX-MRI-SR) methods.  Each stage of an IMEX-MRI-SR method consists of evolving a fast IVP followed by an implicit solve. This allows derivation of IMEX-MRI-SR methods by extending a base ARK method. We discuss the role of base methods further in Section \ref{sec:baseconsistency}, particularly focusing on their role in satisfying order conditions.

IMEX-MRI-SR methods are defined by $n_\Omega$ coefficient matrices $\Omega^{\{k\}}\in \mathbb{R}^{s^{\{S\}}\times s^{\{S\}}}$, $k=0,...,n_\Omega-1$, a coefficient matrix $\Gamma\in\mathbb{R}^{s^{\{S\}}\times s^{\{S\}}}$, and an abscissae vector $c^{\{S\}}\in\mathbb{R}^{s^{\{S\}}}$.  Embedded versions of these methods include additional coefficient vectors $\hat{\omega}^{\{k\}} \in \mathbb{R}^{s^{\{S\}}}$ and $\hat{\gamma} \in \mathbb{R}^{s^{\{S\}}}$.  The algorithm for evolving a solution to an IVP of the form \eqref{eq:imexmriproblem} is defined as follows.

{\definition[IMEX-MRI-SR methods for additively partitioned systems]{An IMEX-MRI-SR method evolves the solution to the problem \eqref{eq:imexmriproblem} from $t_n$ to $t_n+H$ according to the following algorithm.}}
\begin{subequations}
\label{eq:imexmrigarksr}
\begin{align}
    &\text{Let: }Y_1^{\{S\}}:=y_n \\
    \notag
    &\text{For: }i=2,...,s^{\{S\}} \\
    \label{eq:fastivp}
    &\begin{cases}
    \text{Let: }& v_i(0):=y_n, \\
    \text{Solve: }& v'_i(\theta)=f^{\{F\}}(t_n+\theta,v_i(\theta))+g_i(\theta),\ \text{for }\theta\in[0,c^{\{S\}}_iH] \\
    & \text{where }g_i(\theta)=\dfrac{1}{c^{\{S\}}_i}\displaystyle\sum_{j=1}^{i-1}\omega_{i,j}\left(\frac{\theta}{c^{\{S\}}_iH}\right)\left(f^{\{E\}}_j+f^{\{I\}}_j\right) \\
    \text{Solve: }&Y^{\{S\}}_i=v_i\left(c^{\{S\}}_iH\right)+H\displaystyle\sum_{j=1}^i\gamma_{i,j}f^{\{I\}}_j,
    \end{cases} \\
    &\text{Let: }y_{n+1}:=Y^{\{S\}}_{s^{\{S\}}},\\
    \label{eq:embedding}
    &\begin{cases}
    \text{Let: }& \hat{v}(0):=y_n, \\
    \text{Solve: }& \hat{v}(\theta)=f^{\{F\}}(t_n+\theta,\hat{v}(\theta))+\hat{g}(\theta),\ \text{for }\theta\in[0,H] \\
    & \text{where } \hat{g}(\theta)=\displaystyle\sum_{j=1}^{s^{\{S\}}-1}\hat{\omega}_{j}\left(\frac{\theta}{H}\right)\left(f^{\{E\}}_j+f^{\{I\}}_j\right) \\
    \text{Solve: }&\hat{y}_{n+1}=\hat{v}\left(H\right)+H\displaystyle\sum_{j=1}^{s^{\{S\}}-1}\hat{\gamma}_{j}f^{\{I\}}_j + H \hat{\gamma}_{s^{\{S\}}}f^{\{I\}}(t_n+H,\hat{y}_{n+1}),
    \end{cases}
\end{align}
\end{subequations}
where $f^{\{E\}}_j := f^{\{E\}}\left(t_n+c^{\{S\}}_jH,Y_j^{\{S\}}\right)$ and $f^{\{I\}}_j := f^{\{I\}}\left(t_n+c^{\{S\}}_jH,Y_j^{\{S\}}\right)$.  Here $y_{n+1}$ is the time-evolved approximation to $y(t_n+H)$, and $\hat{y}_{n+1}$ is an embedded solution used for temporal error esimation.  If temporal error estimation is not needed, then step \eqref{eq:embedding} may be omitted.

{\definition[Slow tendency coefficients]{The coefficients $\omega_{i,j}$ in \eqref{eq:fastivp} are defined as in \cite{chinomona_implicit-explicit_2021}
\begin{equation}
    \label{eq:imexmrisr_coefficients}
    \omega_{i,j}(\tau):=\sum_{k=0}^{n_\Omega-1}\omega^{\{k\}}_{i,j}\tau^k,\quad \overline{\omega}_{i,j}:=\int_0^1\omega_{i,j}(\tau)\mathrm d\tau=\sum_{k=0}^{n_\Omega-1}\frac{\omega^{\{k\}}_{i,j}}{k+1},
\end{equation}
and we refer to $\Omega^{\{k\}}$, $\overline{\Omega}$, and $\Gamma$ as the $s^{\{S\}}\times s^{\{S\}}$ matrices containing the coefficients $\{\omega^{\{k\}}_{i,j}\}$, $\{\overline{\omega}_{i,j}\}$, and $\{\gamma_{i,j}\}$, respectively.  As in \cite{chinomona_implicit-explicit_2021} we assume the first row of the coefficient matrices are identically zero, and that $\Omega^{\{k\}}$ and $\overline{\Omega}$ are strictly lower-triangular.  The embedding functions $\hat{\omega}_j(\tau)$ are defined similarly to \eqref{eq:imexmrisr_coefficients}, with vectors of coefficients $\hat{\omega}^{\{k\}}$.
}}

The rest of this paper is structured as follows.  In the next subsection, we present related methods to IMEX-MRI-SR, discussing both their similarities and their limitations that are improved upon by the proposed methods.  In Section \ref{sec:orderconditions} we prove order conditions for IMEX-MRI-SR methods up to order four, and in Section \ref{sec:linearstability} we examine their linear stability.  In Section \ref{sec:examplemethods} we provide embedded IMEX-MRI-SR methods of orders 2 through 4.  In Section \ref{sec:merkasimexmrisr} we show that a previous class of multirate methods may be reformulated as IMEX-MRI-SR methods, and we use our convergence theory to show previously unproven features of those methods.  We then present numerical results in Section \ref{sec:numericalresults} to validate our convergence theory, and to compare the efficiency of IMEX-MRI-SR methods against existing methods for problems of the form \eqref{eq:imexmriproblem}.  Finally, in Section \ref{sec:conclusions} we conclude this article with a summary of our contributions, and an outlook toward future work.

\subsection{Related methods}
\label{sec:related_methods}

To our knowledge, there exist three MRI algorithms that allow both IMEX partitioning of the slow dynamics and infinitesimal treatment of the fast dynamics.  These are the first-order accurate ``Lie--Trotter'' \cite{Estep2008,Ropp2005} and the second-order accurate ``Strang--Marchuk'' \cite{Marchuk1968,Strang1968} operator splitting methods, and the recent fourth-order IMEX-MRI-GARK \cite{chinomona_implicit-explicit_2021} method, an IMEX variation of MRI-GARK \cite{sandu_class_2019} methods.  Both Lie--Trotter and Strang--Marchuk operate by sequentially applying distinct solvers to each component, only communicating with one another through the initial conditions applied within each sub-solve.  Variations of both classes of methods for the IMEX multirate splitting \eqref{eq:imexmriproblem} are shown in \cite{chinomona_implicit-explicit_2021}.  However, due to their weak ``initial condition'' coupling, Lie--Trotter and Strang--Marchuk are limited to at most first and second order accuracy in time, regardless of the order of accuracy of each component solver.

IMEX-MRI-GARK methods are organized similarly to IMEX-MRI-SR, in that they advance the solution by alternating between evolving fast IVPs and solving implicit algebraic equations involving $f^{\{I\}}$, and they use the result from each stage to provide a contribution to $g_i(\theta)$ for later stages.  However, unlike IMEX-MRI-SR methods, IMEX-MRI-GARK methods evolve each fast IVP over an interval $[c_{i-1}H, c_iH]$, with an initial condition given as the result of the previous stage.  In each stage, either a fast evolution or implicit solve may occur, with this choice dictated by the abscissae: when $\Delta c_i := c_i-c_{i-1} > 0$ a fast evolution occurs, but when $\Delta c_i = 0$ an algebraic system must be solved.  In all existing implicit MRI-GARK and IMEX-MRI-GARK methods, authors have derived schemes by beginning with a base DIRK or ARK method, and then introduced additional internal stages to ensure an alternating pattern of $\Delta c_i\ne 0$ followed by $\Delta c_{i+1} = 0$, thereby ensuring an appropriate structure  \cite{chinomona_implicit-explicit_2021,sandu_class_2019}.  However, these methods inherently require abscissae vectors $c^{\{S\}}$ that are non-decreasing.  This is generally an uncommon feature in RK methods, especially for orders of accuracy higher than two \cite{calvo2001linearly,Kennedy2003ark,Kennedy2019dirk,Kennedy2019ark,roberts2021implicit,roberts2020coupled,Sandu2015}, so there are relatively few base DIRK and ARK methods available for deriving new MRI-GARK and IMEX-MRI-GARK methods.  Additionally, the ``padding'' process for adding internal stages to ensure $\Delta c_i=0$ is not obvious, and frequently results in an overly complicated trial-and-error process to decide where to insert stages.  As a result of these challenges, no authors have successfully created IMEX-MRI-GARK methods with embeddings for temporal error estimation.

A second class of methods that are closely related to IMEX-MRI-SR are multirate exponential Runge--Kutta (MERK) methods \cite{luan_new_2020}.  While these do not support implicitness at the slow time scale (i.e., $f^{\{I\}}=0$), and they assume that the fast partition is linear (i.e., $f^{\{F\}}(t,y) = \mathcal{L}y$), their structure matches \eqref{eq:imexmrigarksr}.  Each internal stage is computed through evolving a fast IVP over an interval $[0,c_iH]$, using a forcing function that is determined through the values of $f^{\{E\}}$ at previous slow stages.

\section{IMEX-MRI-SR Order Conditions}
\label{sec:orderconditions}

Similar to IMEX-MRI-GARK methods, we derive order conditions for IMEX-MRI-SR methods by first representing the algorithm in GARK form.  Due to the 3-component partitioning \eqref{eq:imexmriproblem}, we must identify GARK coefficients $\bA^{\{\sigma,\nu\}}$, $\bb^{\{\nu\}}$, and $\bc^{\{\nu\}}$ for $\sigma\in\{S,F\}$ and $\nu\in\{I,E,F\}$.  We refer to $\bA^{\{F,\nu\}}$, $\nu\in\{I,E,F\}$ as the fast GARK tables, and to $\bA^{\{S,\nu\}}$, $\nu\in\{I,E,F\}$ as the slow GARK tables. We assume the fast IVP \eqref{eq:fastivp} is solved with one step of a sufficiently accurate Runge--Kutta method having $s^{\{F\}}$ stages, and defined by the Butcher table $(A^{\{F\}}$, $b^{\{F\}}$, $c^{\{F\}})$.  The $j^{th}$ sub-stage in computing the solution to the fast IVP $v_i'(\theta)$ is given by
\begin{align}
\begin{split}
    \label{eq:faststages}
    V_{i,j} 
    = y_n &+ c^{\{S\}}_iH\sum_{k=1}^{s^{\{F\}}} a^{\{F\}}_{j,k}f^{\{F\}}_{i,k} +  H\sum_{k=1}^{s^{\{F\}}}a^{\{F\}}_{j,k}\sum_{\ell=1}^{i-1} \omega_{i,\ell}(c^{\{F\}}_k)\left(f^{\{E\}}_\ell+f^{\{I\}}_\ell\right)
\end{split}
\end{align}
for $i=1,...,s^{\{S\}}$, and $j=1,...,s^{\{F\}}$, where $f^{\{F\}}_{i,k}:=f^{\{F\}}(t_n+c^{\{F\}}_kH,V_{i,k})$, $f^{\{E\}}_\ell := f^{\{E\}}(t_n+c_{\ell}^{\{S\}}H,Y_\ell^{\{S\}})$ and $f^{\{I\}}_\ell := f^{\{I\}}(t_n+c_{\ell}^{\{S\}}H,Y_\ell^{\{S\}})$. The IMEX-MRI-SR method's slow stages $Y_i^{\{S\}}$, $i=1,\ldots,s^{\{S\}}$, are then computed by
\begin{align}
\begin{split}
    \label{eq:slowstages}
    Y_i^{\{S\}} 
    = y_n &+ c^{\{S\}}_iH\sum_{j=1}^{s^{\{F\}}} b^{\{F\}}_{j}f^{\{F\}}_{i,j}\\
    &+H\sum_{j=1}^{s^{\{F\}}}b^{\{F\}}_{j}\sum_{\ell=1}^{i-1} \omega_{i,\ell}\left(c^{\{F\}}_j\right)\left(f^{\{E\}}_\ell+f^{\{I\}}_\ell\right) + H\sum_{\ell=1}^{i}\gamma_{i,\ell} f^{\{I\}}_\ell.
\end{split}
\end{align}

We leverage GARK theory \cite{Sandu2015} to construct order conditions as in \cite{chinomona_implicit-explicit_2021,sandu_class_2019}. Since the slow partitions share the same stages, $Y_i^{\{S\}}$, these methods have six GARK matrices, $\bA^{\{F,F\}}$, $\bA^{\{F,E\}}$, $\bA^{\{F,I\}}$, $\bA^{\{S,F\}}$, $\bA^{\{S,E\}}$, and $\bA^{\{S,I\}}$ and three GARK vectors $\bb^{\{F\}}$, $\bb^{\{E\}}$, and $\bb^{\{I\}}$,
\[
  \begin{array}{l|l|l}
  \bA^{\{F,F\}} & \bA^{\{F,E\}} & \bA^{\{F,I\}}  \\\hline
  \bA^{\{S,F\}} & \bA^{\{S,E\}} & \bA^{\{S,I\}}
  \\\hline
  \bb^{\{F\},T} & \bb^{\{E\},T} & \bb^{\{I\},T}
  \end{array}.
\]
Here, $\bA^{\{F,F\}}$ is a square $s^{\{SF\}}\times s^{\{SF\}}$ matrix (where we define $s^{\{SF\}}=s^{\{S\}}\cdot s^{\{F\}}$), containing the coefficients relating the fast stages $\{V_{i,j}\}$ to each other. It is block-diagonal because $V_{i,j}$ depends only on $V_{i,k}$, $k=1,...,s^{\{F\}}$ through the $A^{\{F\}}$ coefficients, and never on $V_{\ell j}$, $\ell\ne i$. These $s^{\{F\}}\times s^{\{F\}}$ block-diagonal elements are named $A^{\{F,F,i\}}$.

$\bA^{\{F,E\}}$ and $\bA^{\{F,I\}}$ are tall $s^{\{SF\}}\times s^{\{S\}}$ matrices relating the fast stages $\{V_{i,j}\}$ to the explicit and implicit function evaluations of the slow stages $\{Y_i^{\{S\}}\}$, comprised of $s^{\{S\}}$ blocks named $A^{\{F,E,i\}}$.

$\bA^{\{S,F\}}$ is a wide $s^{\{S\}}\times s^{\{SF\}}$ matrix, containing the coefficients relating the slow stages $\{Y_i^{\{S\}}\}$ to the fast stages $\{V_{i,j}\}$. We name these $s^{\{S\}}$ total $s^{\{S\}}\times s^{\{F\}}$ blocks that comprise $\bA^{\{S,F\}}$ as $A^{\{S,F,i\}}$. Each $A^{\{S,F,i\}}$ contains at most one row of non-zero entries, located in row $i$, because $Y_i^{\{S\}}$ depends only on $V_{i,j}$, $j=1,...,s^{\{F\}}$, and never on $V_{\ell,j}$, $\ell \ne i$.

$\bA^{\{S,E\}}$ and $\bA^{\{S,I\}}$ are square $s^{\{S\}}\times s^{\{S\}}$ matrices relating the slow stages $\{Y_i^{\{S\}}\}$ to the explicit and implicit function evaluations of those slow stages.

The vectors $\bb^{\{\sigma\}}$ equal the last rows of $\bA^{\{S,\sigma\}}$, $\sigma\in\{F,E,I\}$, because IMEX-MRI-SR methods have the first-same-as-last (FSAL) property, where the last stage is used as the solution to the step.

When an embedding \eqref{eq:embedding} is included, it will correspond to three additional GARK vectors, $\hat{\bb}^{\{F\}}$  $\hat{\bb}^{\{E\}}$ and $\hat{\bb}^{\{I\}}$.  Due to the structural similarity of \eqref{eq:embedding} to the last stage $Y_{s^{\{S\}}}^{\{S\}}$, the contents of these vectors will only differ from $\bb^{\{F\}}$, $\bb^{\{E\}}$ and $\bb^{\{I\}}$ through their dependence on $\hat{\omega}_{j}^{\{k\}}$ and $\hat{\gamma}_{j}$ instead of $\omega_{s^{\{S\}},j}^{\{k\}}$ and $\gamma_{s^{\{S\}},j}$.  Thus the order conditions that follow for the primary GARK matrices and vectors can be applied to the embedding as well.

With the above simplifications, the GARK tableau can be expressed in block-matrix form as
\[
  \begin{array}{lll|l|l}
  A^{\{F,F,1\}} & & & A^{\{F,E,1\}} & A^{\{F,I,1\}}  \\
  & \ddots & & \vdots & \vdots  \\
  & & A^{\{F,F,s^{\{S\}}\}} & A^{\{F,E,s^{\{S\}}\}} & A^{\{F,I,s^{\{S\}}\}} \\\hline
  A^{\{S,F,1\}} & \cdots & A^{\{S,F,s^{\{S\}}\}} & \bA^{\{S,E\}} & \bA^{\{S,I\}}
  \\\hline
  & \bb^{\{F\},T} & & \bb^{\{E\},T} & \bb^{\{I\},T}
  \end{array}.
\]
A GARK method with this tabular structure has stage update formulas
\begin{subequations}
\begin{equation}
    \label{eq:GARKfaststages}
    V_{i,j} = y_n + H\sum_{k=1}^{s^{\{F\}}} a^{\{F,F,i\}}_{j,k}f^{\{F\}}_{i,j} +H\sum_{k=1}^{s^{\{F\}}}a^{\{F,E,i\}}_{j,k}f^{\{E\}}_j+H\sum_{k=1}^{s^{\{F\}}}a^{\{F,I,i\}}_{j,k}f^{\{I\}}_j,
\end{equation}
\begin{equation}
    \label{eq:GARKslowstages}
    Y_i^{\{S\}} = y_n + H\sum_{j=1}^{s^{\{F\}}} a^{\{S,F,i\}}_{i,j}f^{\{F\}}_{ij}+H\sum_{j=1}^{s^{\{F\}}}\bA^{\{S,E\}}_{i,j}f^{\{E\}}_j+H\sum_{j=1}^{s^{\{F\}}}\bA^{\{S,I\}}_{i,j}f^{\{I\}}_j.
\end{equation}
\end{subequations}
By matching coefficients in \eqref{eq:faststages} and \eqref{eq:GARKfaststages}, we identify the fast GARK coefficients,
\begin{subequations}
\label{eq:AFXindices}
\begin{align}
    \label{eq:AFFindices}
    a^{\{F,F,i\}}_{j,k} &= c^{\{S\}}_ia^{\{F\}}_{j,k},\\
    \label{eq:AFEindices}
    a^{\{F,E,i\}}_{j,\ell} &= \sum_{l=1}^{s^{\{F\}}}a^{\{F\}}_{j,k}\omega_{i,\ell}(c^{\{F\}}_l)
    = \sum_{l=1}^{s^{\{F\}}}\sum_{k=0}^{n_\Omega-1}\omega^{\{k\}}_{i,\ell}a^{\{F\}}_{j,l}c^{\{F\}\times k}_l,\\
    \label{eq:AFIindices}
    a^{\{F,I,i\}}_{j,\ell} &= a^{\{F,E,i\}}_{j,\ell} =  \sum_{k=1}^{s^{\{F\}}}b^{\{F\}}_{j,k}\omega_{i,\ell}(c^{\{F\}}_k),
\end{align}
\end{subequations}
where the superscript $\times k$ denotes element-wise exponentiation of a vector by $k$.  Converting these to matrix form, we have the fast GARK tables,
\begin{subequations}
\label{eq:AFXtables}
\begin{align}
    \label{eq:AFFtable}
    \bA^{\{F,F\}} &= C^{\{S\}}\otimes A^{\{F\}} \in \mathbb{R}^{s^{\{SF\}}\times s^{\{SF\}}},\\
    \label{eq:AFEtable}
    \bA^{\{F,E\}} &= \sum_{k=0}^{n_\Omega-1}\Omega^{\{k\}}\otimes A^{\{F\}}c^{\{F\}\times k} \in \mathbb{R}^{s^{\{SF\}}\times s^{\{S\}}},\\
    \label{eq:AFItable}
    \bA^{\{F,I\}} &= \bA^{\{F,E\}}
    = \sum_{k=0}^{n_\Omega-1}\Omega^{\{k\}}\otimes A^{\{F\}}c^{\{F\}\times k} \in \mathbb{R}^{s^{\{SF\}}\times s^{\{S\}}},
\end{align}
\end{subequations}
where $\otimes$ denotes the Kronecker product and $C^{\{\sigma\}}=\operatorname{diag}(c^{\{\sigma\}})$.

We similarly find the slow GARK table coefficients by comparing \eqref{eq:slowstages} and \eqref{eq:GARKslowstages},
\begin{subequations}
\label{eq:ASXindices}
\begin{align}
    \label{eq:ASFindices}
    a^{\{S,F,i\}}_{i,j} &= c^{\{S\}}_ib^{\{F\}}_j,\\
    \label{eq:ASEindices}
    a^{\{S,E\}}_{i,\ell} &= \sum_{k=1}^{s^{\{F\}}}b^{\{F\}}_{k}\omega_{i,\ell}(c^{\{F\}}_k)
    = \sum_{k=0}^{n_\Omega-1}\omega^{\{k\}}_{i,\ell}b^{\{F\},T}c^{\{F\}\times k},\\
    \label{eq:ASIindices}
    a^{\{S,I\}}_{i,\ell} &= \sum_{k=1}^{s^{\{F\}}}b^{\{F\}}_{k}\omega_{i,\ell}\left(c^{\{F\}}_j\right) + \gamma_{i,\ell}
    = \sum_{k=0}^{n_\Omega-1}\omega^{\{k\}}_{i,\ell}b^{\{F\},T}c^{\{F\}\times k} + \gamma_{i,\ell}.
\end{align}
\end{subequations}
Due to the infinitesimal nature of the fast method, we assume that it satisfies all bushy-tree order conditions,
\[
  b^{\{F\},T}c^{\{F\}\times k}=\frac{1}{k+1}, \quad k=0,...,n_\Omega-1.
\]
Leveraging this, and examining \eqref{eq:ASXindices}, the slow GARK tables in matrix-form are
\begin{subequations}
\label{eq:ASXtables}
\begin{align}
    \label{eq:ASFtable}
    \bA^{\{S,F\}} &= C^{\{S\}}\otimes b^{\{F\},T} \in \mathbb{R}^{s^{\{S\}}\times s^{\{SF\}}},\\
    \label{eq:ASEtable}
    \bA^{\{S,E\}} &= \sum_{k=0}^{n_\Omega-1} \Omega^{\{k\}}\frac{1}{k+1}
    = \overline{\Omega} \in \mathbb{R}^{s^{\{S\}}\times s^{\{S\}}},\\
    \label{eq:ASItable}
    \bA^{\{S,I\}} &= \sum_{k=0}^{n_\Omega-1} \Omega^{\{k\}}\frac{1}{k+1} + \Gamma
    = \overline{\Omega} + \Gamma \in \mathbb{R}^{s^{\{S\}}\times s^{\{S\}}}.
\end{align}
\end{subequations}

We additionally define the following variables, knowing that an IMEX-MRI-SR method has the FSAL property with respect to the slow stages $Y^{\{S\}}$,
\begin{subequations}
\label{eq:btables}
\begin{align}
    \label{eq:bFtable}
    \bb^{\{F\},T} &= e_{s^{\{S\}}}^T\bA^{\{S,F\}}
    = (C^{\{S\}}e_{s^{\{S\}}}\otimes b^{\{F\},T})
    = (e_{s^{\{S\}}}\otimes b^{\{F\}})^T,\\
    \label{eq:bEtable}
    \bb^{\{E\},T}&= e_{s^{\{S\}}}^T\bA^{\{S,E\}}
    =  e_{s^{\{S\}}}^T\overline{\Omega},\\
    \label{eq:bItable}
    \bb^{\{I\},T}&= e_{s^{\{S\}}}^T\bA^{\{S,I\}}
    =  e_{s^{\{S\}}}^T(\overline{\Omega} + \Gamma),
\end{align}
\end{subequations}
where $e_{s^{\{S\}}}$ is an $s^{\{S\}}$-length vector of all zeroes except a one in the last position.

\subsection{Base Consistency}
\label{sec:baseconsistency}

If $f^{\{F\}}(t,y)=0$, then an IMEX-MRI-SR method reduces to a simple ARK method defined by slow explicit and implicit base methods,
\begin{align*}
    (A^{\{E\}},b^{\{E\}},c^{\{E\}})&=(\bA^{\{S,E\}},\bb^{\{E\}},\bA^{\{S,E\}}\mathbbm{1}^{s^{\{S\}}}),\\
    (A^{\{I\}},b^{\{I\}},c^{\{I\}})&=(\bA^{\{S,I\}},\bb^{\{I\}},\bA^{\{S,I\}}\mathbbm{1}^{s^{\{S\}}}),
\end{align*}
where $\mathbbm{1}^{s^{\{S\}}}$ is a vector of ones with length $s^{\{S\}}$.  This ARK method has stages
\begin{align*}
  Y_i^{\{S\}} = y_n &+ H\sum_{j=1}^{i-1}a^{\{E\}}_{i,j}f^{\{E\}}(t_n+c^{\{E\}}_jH,Y_j^{\{S\}})\\ &+ H\sum_{j=1}^{i}a^{\{I\}}_{i,j}f^{\{I\}}(t_n+c^{\{I\}}_jH,Y_j^{\{S\}}),
\end{align*}
with the last stage being equal to the solution for the step. We will refer to this ARK method as the slow base method throughout the derivation of the order conditions.

{\remark[First order conditions]{As long as the slow base method and the arbitrary fast method are order one or higher, there are no additional first-order coupling conditions for a GARK method. Thus, the effective first-order IMEX-MRI-SR condition is that $\bA^{\{S,E\}}=\overline{\Omega}$ and $\bA^{\{S,I\}}=\overline{\Omega}+\Gamma$ form an order one ARK method, which can be simply achieved if both $\bA^{\{S,E\}}$ and $\bA^{\{S,I\}}$ have first-order accuracy.}}

{\remark[Deriving IMEX-MRI-SR methods from existing ARK methods]{Due to the IMEX-MRI-SR structure, a base ARK method should be stiffly-accurate; otherwise it must first be converted to stiffly-accurate form by appending the $b$ vectors to the bottom and pad a column of zeros to the right of the ARK's $A$ matrices. \label{remark:existingarkbase} }}

\subsection{Kronecker Product Identities}
\label{sec:kroneckeridentities}
In the ensuing derivations we leverage the following identities:
\begin{subequations}
\begin{align}
    \label{eq:krontranspose}
    (A\otimes B)^T &= A^T\otimes B^T,\\
    \label{eq:kronproduct}
    (A\otimes B)(C \otimes D) &= (AC)\otimes (BD),\\
    \label{eq:kronrowsum}
    (A\otimes v)\mathbbm{1}^{\{r(A)\}} &= (A\mathbbm{1}^{\{r(A)\}})\otimes v,\\
    \label{eq:kronrowsum2}
    (v\otimes A)\mathbbm{1}^{\{r(A)\}} &= v\otimes (A\mathbbm{1}^{\{r(A)\}}),
\end{align}
\end{subequations}
where $A,B,C,D$ are arbitrary matrices with compatible dimemsions, $v$ is an arbitrary vector, and $\mathbbm{1}^{\{r(A)\}}$ is a vector of ones with length equal to the number of rows of $A$. Identities \eqref{eq:krontranspose} and \eqref{eq:kronproduct} can be found in \cite{Langville2004}. Identities \eqref{eq:kronrowsum} and \eqref{eq:kronrowsum2} can be shown through elementary computation.

\subsection{GARK Internal Consistency}
\label{sec:internalconsistency}
\begin{theorem}[GARK Internal Consistency]
An IMEX-MRI-SR method satisfies the GARK internal consistency conditions,
\begin{subequations}
\begin{align}
    \bc^{\{F,F\}} &= \bc^{\{F,E\}}=\bc^{\{F,I\}},\\
    \bc^{\{S,F\}} &= \bc^{\{S,E\}}=\bc^{\{S,I\}},
\end{align}
\end{subequations}
where $c^{\{F,\nu\}}=A^{\{F,\nu\}}\mathbbm{1}^{s^{\{SF\}}}$, $c^{\{S,\nu\}}=\bA^{\{S,\nu\}}\mathbbm{1}^{s^{\{S\}}}$, $\nu\in\{I,E,F\}$, if the following conditions hold:
\begin{subequations}
\begin{align}
    \Omega^{\{0\}}\mathbbm{1}^{s^{\{S\}}} &= c^{\{S\}}, \\
    \Omega^{\{k\}}\mathbbm{1}^{s^{\{S\}}} &= 0^{s^{\{S\}}},\ k=1,...,n_\Omega-1,\\
    \Gamma\mathbbm{1}^{s^{\{S\}}} &= 0^{s^{\{S\}}},
\end{align}
\end{subequations}
where $0^{s^{\{S\}}}$ is a vector of zeros with length $s^{\{S\}}$.
\end{theorem}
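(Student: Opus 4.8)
The plan is to verify both sets of internal consistency conditions by directly computing the GARK row sums $\bc^{\{\sigma,\nu\}} = \bA^{\{\sigma,\nu\}}\mathbbm{1}$ from the explicit block/Kronecker forms of the tables already established in \eqref{eq:AFXtables} and \eqref{eq:ASXtables}, using the Kronecker-product identities of Section~\ref{sec:kroneckeridentities}. The auxiliary facts needed are: the fast Runge--Kutta method's own row-sum condition $A^{\{F\}}\mathbbm{1}^{s^{\{F\}}} = c^{\{F\}}$; the bushy-tree condition at $k=0$, namely $b^{\{F\},T}\mathbbm{1}^{s^{\{F\}}} = 1$ (since $c^{\{F\}\times 0} = \mathbbm{1}^{s^{\{F\}}}$); and the factorization $\mathbbm{1}^{s^{\{SF\}}} = \mathbbm{1}^{s^{\{S\}}}\otimes\mathbbm{1}^{s^{\{F\}}}$.

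For the fast tables: for $\bA^{\{F,F\}} = C^{\{S\}}\otimes A^{\{F\}}$, applying \eqref{eq:kronproduct} to $\mathbbm{1}^{s^{\{SF\}}} = \mathbbm{1}^{s^{\{S\}}}\otimes\mathbbm{1}^{s^{\{F\}}}$ gives $\bc^{\{F,F\}} = (C^{\{S\}}\mathbbm{1}^{s^{\{S\}}})\otimes(A^{\{F\}}\mathbbm{1}^{s^{\{F\}}}) = c^{\{S\}}\otimes c^{\{F\}}$. For $\bA^{\{F,E\}} = \sum_k \Omega^{\{k\}}\otimes A^{\{F\}}c^{\{F\}\times k}$ (and $\bA^{\{F,I\}}$ equals it), since $A^{\{F\}}c^{\{F\}\times k}$ is a vector I would apply \eqref{eq:kronrowsum} termwise to get $\sum_k (\Omega^{\{k\}}\mathbbm{1}^{s^{\{S\}}})\otimes(A^{\{F\}}c^{\{F\}\times k})$; the hypotheses $\Omega^{\{0\}}\mathbbm{1}^{s^{\{S\}}} = c^{\{S\}}$ and $\Omega^{\{k\}}\mathbbm{1}^{s^{\{S\}}} = 0^{s^{\{S\}}}$ for $k\ge 1$ annihilate all but the $k=0$ summand, which equals $c^{\{S\}}\otimes (A^{\{F\}}\mathbbm{1}^{s^{\{F\}}}) = c^{\{S\}}\otimes c^{\{F\}}$. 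Hence $\bc^{\{F,F\}} = \bc^{\{F,E\}} = \bc^{\{F,I\}}$.

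For the slow tables: for $\bA^{\{S,F\}} = C^{\{S\}}\otimes b^{\{F\},T}$, identity \eqref{eq:kronproduct} with the factorization of $\mathbbm{1}^{s^{\{SF\}}}$ yields $\bc^{\{S,F\}} = (C^{\{S\}}\mathbbm{1}^{s^{\{S\}}})\otimes(b^{\{F\},T}\mathbbm{1}^{s^{\{F\}}}) = c^{\{S\}}\otimes 1 = c^{\{S\}}$, using the $k=0$ bushy-tree condition. For $\bA^{\{S,E\}} = \overline{\Omega} = \sum_k \frac{1}{k+1}\Omega^{\{k\}}$, the same two hypotheses on $\Omega^{\{k\}}\mathbbm{1}^{s^{\{S\}}}$ give $\bc^{\{S,E\}} = c^{\{S\}}$; and for $\bA^{\{S,I\}} = \overline{\Omega}+\Gamma$, adding $\Gamma\mathbbm{1}^{s^{\{S\}}} = 0^{s^{\{S\}}}$ (the third hypothesis) still leaves $\bc^{\{S,I\}} = c^{\{S\}}$. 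Therefore $\bc^{\{S,F\}} = \bc^{\{S,E\}} = \bc^{\{S,I\}}$, which completes the proof.

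This argument is essentially bookkeeping, so I do not expect a genuine obstacle; the only points requiring care are (i) consistently tracking which ones-vector length enters each row sum, since the rectangular tables $\bA^{\{F,E\}}$ and $\bA^{\{F,I\}}$ take $\mathbbm{1}^{s^{\{S\}}}$ while $\bA^{\{F,F\}}$ and $\bA^{\{S,F\}}$ take $\mathbbm{1}^{s^{\{SF\}}}$, and (ii) isolating the $k=0$ term of the $\Omega$-sums, as that is the sole place where the nontrivial hypothesis $\Omega^{\{0\}}\mathbbm{1}^{s^{\{S\}}} = c^{\{S\}}$ is used and where the identification $c^{\{F\}\times 0} = \mathbbm{1}^{s^{\{F\}}}$ together with the fast method's row-sum condition must be invoked.
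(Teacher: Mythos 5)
Your proposal is correct and follows essentially the same route as the paper's proof: a direct computation of the row sums of the Kronecker-form tables \eqref{eq:AFXtables} and \eqref{eq:ASXtables}, isolating the $k=0$ term of the $\Omega$-sums and invoking $A^{\{F\}}\mathbbm{1}^{s^{\{F\}}}=c^{\{F\}}$, $b^{\{F\},T}\mathbbm{1}^{s^{\{F\}}}=1$, and the hypotheses on $\Omega^{\{k\}}\mathbbm{1}^{s^{\{S\}}}$ and $\Gamma\mathbbm{1}^{s^{\{S\}}}$. The only cosmetic difference is that you factor $\mathbbm{1}^{s^{\{SF\}}}=\mathbbm{1}^{s^{\{S\}}}\otimes\mathbbm{1}^{s^{\{F\}}}$ and use \eqref{eq:kronproduct}, whereas the paper invokes the row-sum identities \eqref{eq:kronrowsum}--\eqref{eq:kronrowsum2}; the substance is identical.
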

\begin{proof}
Computing the fast GARK abscissae from the respective fast GARK tables,
\begin{align*}
    \bc^{\{F,F\}} &= \bA^{\{F,F\}}\mathbbm{1}^{s^{\{SF\}}} = (C^{\{S\}}\otimes A^{\{F\}})\mathbbm{1}^{s^{\{SF\}}}
    = c^{\{S\}}\otimes c^{\{F\}}, \\
    \bc^{\{F,E\}} &= \bA^{\{F,E\}}\mathbbm{1}^{s^{\{S\}}} = \sum_{k=0}^{n_\Omega-1}\Omega^{\{k\}}\otimes A^{\{F\}}c^{\{F\}\times k}\mathbbm{1}^{s^{\{S\}}} \\
    &= (\Omega^{\{0\}}\mathbbm{1}^{s^{\{S\}}})\otimes c^{\{F\}} + \sum_{k=1}^{n_\Omega-1}(\Omega^{\{k\}}\mathbbm{1}^{s^{\{S\}}})\otimes A^{\{F\}}c^{\{F\}\times k}, \\
    \bc^{\{F,I\}} &= \bA^{\{F,I\}}\mathbbm{1}^{s^{\{S\}}} = \bA^{\{F,E\}}\mathbbm{1}^{s^{\{S\}}} = \bc^{\{F,E\}},
\end{align*}
thus $\bc^{\{F,F\}}=\bc^{\{F,I\}}=\bc^{\{F,E\}}$ when $\Omega^0\mathbbm{1}^{s^{\{S\}}}=c^{\{S\}}$ and $\Omega^{\{k\}}\mathbbm{1}^{s^{\{S\}}}=0^{s^{\{S\}}},\ k=1,...,n_\Omega-1$.
Computing the slow GARK abscissae from the slow GARK tables,
\begin{align*}
    \bc^{\{S,F\}} &= \bA^{\{S,F\}}\mathbbm{1}^{s^{\{SF\}}} = (C^{\{S\}}\otimes b^{\{F\}})\mathbbm{1}^{s^{\{SF\}}} = c^{\{S\}}, \\
    \bc^{\{S,E\}} &= \bA^{\{S,E\}}\mathbbm{1}^{s^{\{S\}}} = \overline{\Omega}\mathbbm{1}^{s^{\{S\}}} = \sum_{k=0}^{n_\Omega-1}\Omega^{\{k\}}\mathbbm{1}^{s^{\{S\}}} \frac{1}{k+1}, \\
    \bc^{\{S,E\}} &= \bA^{\{S,I\}}\mathbbm{1}^{s^{\{S\}}} = (\overline{\Omega} + \Gamma)\mathbbm{1}^{s^{\{S\}}} = \sum_{k=0}^{n_\Omega-1}\Omega^{\{k\}}\mathbbm{1}^{s^{\{S\}}} \frac{1}{k+1} + \Gamma\mathbbm{1}^{s^{\{S\}}},
\end{align*}
and thus $\bc^{\{S,F\}}=\bc^{\{S,I\}}=\bc^{\{S,E\}}$ is satisfied using the same conditions as above, with the additional constraint that $\Gamma\mathbbm{1}^{s^{\{S\}}}=0^{s^{\{S\}}}$.
\end{proof}

{\remark[Second order conditions]{When the tables that comprise a GARK method are each at least second-order accurate and internal consistency holds, there are no additional second-order coupling conditions. Thus, the internal consistency conditions act as second-order conditions when the slow base method and arbitrary fast method are at least second-order accurate.}}

\subsection{Higher Order Conditions}
\label{sec:higherorderconditions}

\begin{theorem}[Third Order Conditions]
An internally-consistent IMEX-MRI-SR method with third-order accurate slow base method and with fast method of order $\max(3,n_\Omega+1)$ accurate is third-order accurate if the following condition holds:
\begin{equation}
    \label{eq:order3simplified}
    e_{s^{\{S\}}}^T\left(\sum_{k=0}^{n_\Omega-1}\Omega^{\{k\}}\frac{1}{(k+1)(k+2)}\right)c^{\{S\}}=\frac{1}{6}.
\end{equation}
\label{thm:thirdorder}
\end{theorem}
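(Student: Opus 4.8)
The plan is to recognize \eqref{eq:order3simplified} as the single GARK coupling condition that survives at third order. By the GARK order-condition theory of \cite{Sandu2015} (as used in \cite{chinomona_implicit-explicit_2021,sandu_class_2019}), for an internally-consistent GARK scheme all of whose constituent methods are third-order accurate, the only order-three conditions that can still fail are the ``tall-tree'' coupling conditions, which under internal consistency take the form
\[
  \bb^{\{\nu\},T}\,\bA^{\{\sigma(\nu),\mu\}}\,\bc^{\{\mu\}}=\frac{1}{6},\qquad \nu,\mu\in\{F,E,I\},
\]
where $\sigma(F)=F$, $\sigma(E)=\sigma(I)=S$, the fast-stage abscissa is $\bc^{\{F\}}:=c^{\{S\}}\otimes c^{\{F\}}$, and the slow-stage abscissae are $\bc^{\{E\}}=\bc^{\{I\}}:=c^{\{S\}}$; internal consistency also collapses the companion ``bushy-tree'' conditions $\bb^{\{\nu\},T}\bigl(\bc^{\{\sigma(\nu)\}}\times\bc^{\{\sigma(\nu)\}}\bigr)=\tfrac13$ into statements implied by the constituent orders together with $e_{s^{\{S\}}}^Tc^{\{S\}}=\bb^{\{F\},T}\mathbbm{1}^{s^{\{SF\}}}=1$. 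A preliminary observation, immediate from \eqref{eq:AFFtable}, \eqref{eq:bFtable}, and the Kronecker identities of Section \ref{sec:kroneckeridentities}, is that the fast partition's GARK sub-method $(\bA^{\{F,F\}},\bb^{\{F\}},\bc^{\{F\}})$ is itself third-order accurate whenever the arbitrary fast Runge--Kutta method is, so the ``all constituents order three'' hypothesis holds. It then remains to examine the nine tall-tree conditions above.

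I would handle them in three groups. For $\nu,\mu\in\{E,I\}$: substituting $\bA^{\{S,E\}}=\overline{\Omega}$, $\bA^{\{S,I\}}=\overline{\Omega}+\Gamma$, $\bc^{\{E\}}=\bc^{\{I\}}=c^{\{S\}}$, and \eqref{eq:btables} shows these four conditions are precisely the third-order tall-tree conditions of the slow base ARK method of Section \ref{sec:baseconsistency}, so they hold by hypothesis. For $\mu=F$: using \eqref{eq:kronproduct} and the fast method's bushy-tree conditions $b^{\{F\},T}c^{\{F\}\times k}=\tfrac1{k+1}$, the $(F,F)$ condition factors as $\bigl(e_{s^{\{S\}}}^Tc^{\{S\}\times2}\bigr)\bigl(b^{\{F\},T}A^{\{F\}}c^{\{F\}}\bigr)$, which is $\tfrac16$ since $e_{s^{\{S\}}}^Tc^{\{S\}\times2}=1$ and the fast method is third-order; and for $\nu\in\{E,I\}$ the $(\nu,F)$ condition factors as $\tfrac12\,\bb^{\{\nu\},T}c^{\{S\}\times2}$, which is $\tfrac16$ by the slow base method's third-order bushy-tree condition $\bb^{\{\nu\},T}(c^{\{S\}}\times c^{\{S\}})=\tfrac13$.

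The remaining group, $\nu=F$ with $\mu\in\{E,I\}$, is the crux and produces \eqref{eq:order3simplified}. Since $\bA^{\{F,I\}}=\bA^{\{F,E\}}$ by \eqref{eq:AFItable} the two subcases coincide, so it suffices to evaluate $\bb^{\{F\},T}\bA^{\{F,E\}}c^{\{S\}}$. Substituting $\bb^{\{F\},T}=(e_{s^{\{S\}}}\otimes b^{\{F\}})^T$ from \eqref{eq:bFtable} and $\bA^{\{F,E\}}=\sum_{k=0}^{n_\Omega-1}\Omega^{\{k\}}\otimes A^{\{F\}}c^{\{F\}\times k}$ from \eqref{eq:AFEtable}, and applying \eqref{eq:kronproduct}, the $k$-th term becomes $\bigl(e_{s^{\{S\}}}^T\Omega^{\{k\}}\bigr)\otimes\bigl(b^{\{F\},T}A^{\{F\}}c^{\{F\}\times k}\bigr)$. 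Since the fast method has order $\max(3,n_\Omega+1)$, it satisfies the order-$(k+2)$ rooted-tree condition $b^{\{F\},T}A^{\{F\}}c^{\{F\}\times k}=\tfrac1{(k+1)(k+2)}$ for every $k=0,\dots,n_\Omega-1$; collecting these scalars and multiplying on the right by $c^{\{S\}}$ yields $\bb^{\{F\},T}\bA^{\{F,E\}}c^{\{S\}}=e_{s^{\{S\}}}^T\Bigl(\sum_{k=0}^{n_\Omega-1}\tfrac{\Omega^{\{k\}}}{(k+1)(k+2)}\Bigr)c^{\{S\}}$, which is exactly \eqref{eq:order3simplified}.

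The main obstacle I anticipate is the bookkeeping of the first step: pinning down which rooted-tree order conditions actually remain once internal consistency and the constituent orders are imposed, and keeping straight that the abscissa attached to the fast-stage block is $c^{\{S\}}\otimes c^{\{F\}}$ rather than $c^{\{S\}}$, so that the Kronecker reductions in the last two groups produce the correct scalars. Once that framework is in place, the remainder is a short computation with the identities of Section \ref{sec:kroneckeridentities} and the fast method's bushy-tree and ``tall-tree-with-$k$-leaves'' conditions.
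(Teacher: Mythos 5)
Your proposal is correct and follows essentially the same route as the paper: reduce to the GARK third-order coupling conditions under internal consistency, then evaluate them via the Kronecker-product identities and the fast method's conditions $b^{\{F\},T}A^{\{F\}}c^{\{F\}\times k}=\tfrac{1}{(k+1)(k+2)}$, with the $\nu=F$, $\mu\in\{E,I\}$ case collapsing (since $\bA^{\{F,E\}}=\bA^{\{F,I\}}$) to exactly \eqref{eq:order3simplified}. The only difference is cosmetic: you enumerate and check all nine coupling conditions yourself (implicitly using $c^{\{S\}}_{s^{\{S\}}}=1$ from the FSAL/stiffly-accurate structure), whereas the paper cites \cite{chinomona_implicit-explicit_2021} to dismiss the five already implied by the constituent methods' orders and verifies only the remaining four.
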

\begin{proof} From \cite{chinomona_implicit-explicit_2021}, a internally consistent GARK method of this structure with a third-order accurate slow base method and an order $\max(3,n_\Omega+1)$ fast method has four third-order coupling conditions,
\begin{subequations}
    \begin{align}
        \label{eq:order3abase}
        \bb^{\{\sigma\},T}\bA^{\{S,F\}}\bc^{\{F\}} &= \frac16,\\
        \label{eq:order3bbase}
         \bb^{\{F\},T}\bA^{\{S,\sigma\}}\bc^{\{S\}} &= \frac16,
    \end{align}
\end{subequations}
for $\sigma\in\{I,E\}$. An internally-consistent IMEX-MRI-SR method has
\begin{subequations}
    \begin{align}
        \label{eq:cFdef}
        \bc^{\{F\}}&=\bc^{\{F,F\}}=\bc^{\{F,E\}}=\bc^{\{F,I\}}=c^{\{S\}}\otimes c^{\{F\}},\\
        \label{eq:cSdef}
        \bc^{\{S\}}&=\bc^{\{S,F\}}=\bc^{\{S,E\}}=\bc^{\{S,I\}}=c^{\{E\}}=c^{\{I\}}=c^{\{S\}}.
    \end{align}
\end{subequations}
Conditions \eqref{eq:order3abase} are automatically satisfied for both values of $\sigma$:
\begin{align*}
    \frac16 &= \bb^{\{\sigma\},T}\bA^{\{S,F\}}\bc^{\{F\}} \\
    &= b^{\{\sigma\},T}(C^{\{S\}}\otimes  b^{\{F\},T})(c^{\{S\}}\otimes c^{\{F\}}) = \frac{1}{3}\cdot\frac{1}{2}
\end{align*}
Conditions \eqref{eq:order3bbase} reduce to the single condition \eqref{eq:order3simplified} because $\bA^{\{F,E\}}=\bA^{\{F,I\}}$ for an IMEX-MRI-SR method:
\begin{align*}
    \frac16 &= \bb^{\{F\},T}\bA^{\{F,\sigma\}}\bc^{\{S\}} = (e_{s^{\{S\}}}^T\otimes b^{\{F\},T})\left(\sum_{k=0}^{n_\Omega-1}\Omega^{\{k\}}\otimes A^{\{F\}}c^{\{F\}\times k}\right) c^{\{S\}} \\
    &= e_{s^{\{S\}}}^T\left(\sum_{k=0}^{n_\Omega-1}\Omega^{\{k\}}\frac{1}{(k+1)(k+2)}\right) c^{\{S\}}.
\end{align*}
\end{proof}

\begin{theorem}[Fourth Order Conditions]
An IMEX-MRI-SR method satisfying Theorem \ref{thm:thirdorder} is fourth-order accurate if the slow base method is fourth-order accurate, the arbitrary fast method is order $\max(4,n_\Omega+2)$ accurate, and the following conditions hold:
\begin{subequations}
    \label{eq:order4simplified}
    \begin{align}
        \label{eq:order4simplifiedA}
        e_{s^{\{S\}}}^T\left(\sum_{k=0}^{n_\Omega-1}\Omega^{\{k\}}\frac{1}{(k+1)(k+3)}\right)c^{\{S\}} &= \frac18, \\
    \label{eq:order4simplifiedB}
        e_{s^{\{S\}}}^T\left(\sum_{k=0}^{n_\Omega-1}\Omega^{\{k\}} \frac{1}{(k+1)(k+2)}\right)C^{\{S\}}c^{\{S\}} &= \frac{1}{12}, \\
    \label{eq:order4simplifiedC}
     e_{s^{\{S\}}}^T\Gamma C^{\{S\}}\left(\sum_{k=0}^{n_\Omega-1}\Omega^{\{k\}}\frac{1}{(k+1)(k+2)}\right)c^{\{S\}} &= 0, \\
    \label{eq:order4simplifiedD}
        e_{s^{\{S\}}}^T\overline{\Omega} C^{\{S\}}\left(\sum_{k=0}^{n_\Omega-1}\Omega^{\{k\}}\frac{1}{(k+1)(k+2)}\right)c^{\{S\}} &= \frac{1}{24},\\
    \label{eq:order4simplifiedF}
        e_{s^{\{S\}}}^T\left(\sum_{k=0}^{n_\Omega-1}\Omega^{\{k\}}\frac{1}{(k+1)(k+2)}\right)\overline{\Omega}c^{\{S\}}&=\frac{1}{24}, \\
    \label{eq:order4simplifiedG}
        e_{s^{\{S\}}}^T\left(\sum_{k=0}^{n_\Omega-1}\Omega^{\{k\}}\frac{1}{(k+1)(k+2)}\right)\Gamma c^{\{S\}}&=0.
    \end{align}
\end{subequations}
\label{thm:fourthorder}
\end{theorem}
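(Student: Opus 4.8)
The plan is to follow the template of the proof of Theorem~\ref{thm:thirdorder}. First I would quote from GARK theory~\cite{Sandu2015,chinomona_implicit-explicit_2021} the finite list of fourth-order \emph{coupling} conditions that an internally-consistent three-partition GARK method must satisfy, given that its non-fast base ARK method $(\overline{\Omega},\overline{\Omega}+\Gamma)$ is fourth-order accurate and its fast Runge--Kutta method has order $\max(4,n_\Omega+2)$. Then I would substitute the IMEX-MRI-SR structure \eqref{eq:AFXtables}--\eqref{eq:ASXtables}, \eqref{eq:btables} together with the internal-consistency identities \eqref{eq:cFdef}--\eqref{eq:cSdef}, and show that this list collapses to exactly \eqref{eq:order4simplifiedA}--\eqref{eq:order4simplifiedG}.

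Before the main argument I would record the fast-method facts that get used repeatedly. Since the fast method has order $\max(4,n_\Omega+2)$ and satisfies all bushy-tree conditions, for $0\le k\le n_\Omega-1$ it also satisfies the ``tall-tree'' conditions $b^{\{F\},T}c^{\{F\}\times k}=\tfrac1{k+1}$, $b^{\{F\},T}A^{\{F\}}c^{\{F\}\times k}=\tfrac1{(k+1)(k+2)}$, $b^{\{F\},T}C^{\{F\}}A^{\{F\}}c^{\{F\}\times k}=\tfrac1{(k+1)(k+3)}$, and $b^{\{F\},T}(A^{\{F\}})^2c^{\{F\}\times k}=\tfrac1{(k+1)(k+2)(k+3)}$, these being ordinary Runge--Kutta order conditions for trees of order at most $n_\Omega+2$. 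I would also note that internal consistency together with consistency of the base ARK method forces $c^{\{S\}}_{s^{\{S\}}}=1$, so $e_{s^{\{S\}}}^TC^{\{S\}}=e_{s^{\{S\}}}^T$.

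Next I would split the fourth-order coupling conditions into two classes. Every condition whose tree-coloring uses neither $\bA^{\{F,E\}}$ nor $\bA^{\{F,I\}}$ factors through $\bA^{\{F,F\}}=C^{\{S\}}\otimes A^{\{F\}}$, $\bA^{\{S,F\}}=C^{\{S\}}\otimes b^{\{F\},T}$, $\bA^{\{S,E\}}=\overline{\Omega}$, $\bA^{\{S,I\}}=\overline{\Omega}+\Gamma$, and $\bc^{\{S\}}=c^{\{S\}}$; using \eqref{eq:kronproduct} such a condition reduces to a product of a fourth-order condition of the base ARK method and a tall-tree condition of the fast method (any leftover $C^{\{S\}}$ factors collapsing via $c^{\{S\}}_{s^{\{S\}}}=1$ and internal consistency), hence it holds automatically by hypothesis. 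Each remaining condition carries at least one factor of $\bA^{\{F,E\}}=\bA^{\{F,I\}}=\sum_k\Omega^{\{k\}}\otimes A^{\{F\}}c^{\{F\}\times k}$, and for each such condition I would push the Kronecker products through with identities \eqref{eq:krontranspose}--\eqref{eq:kronrowsum2}, use $\bb^{\{F\},T}=e_{s^{\{S\}}}^T\otimes b^{\{F\},T}$ and $\bA^{\{S,F\}}\bA^{\{F,\sigma\}}=\sum_k\tfrac1{(k+1)(k+2)}C^{\{S\}}\Omega^{\{k\}}$, and replace every factor of the form $b^{\{F\},T}(\cdot)c^{\{F\}\times k}$ by the corresponding rational constant listed above. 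The tree with a $\bc^{\{F\}}$-weighted root gives \eqref{eq:order4simplifiedA}; the tree with a doubly-leafed slow stage gives \eqref{eq:order4simplifiedB}; the order-four ``path'' trees with an interior $\{S\}$-colored node give \eqref{eq:order4simplifiedF} and \eqref{eq:order4simplifiedD} when the interior node is $E$-colored, and their $I$-colored variants differ only by replacing $\overline{\Omega}$ with $\overline{\Omega}+\Gamma$, so taking differences isolates the $\Gamma$-part and forces \eqref{eq:order4simplifiedC} and \eqref{eq:order4simplifiedG} to vanish. The few remaining conditions reduce either to the third-order form, handled by Theorem~\ref{thm:thirdorder} via internal consistency and $\bA^{\{F,\sigma\}}\mathbbm{1}^{s^{\{S\}}}=c^{\{S\}}$, or, via the partial-fraction identity $\tfrac1{(k+1)(k+2)(k+3)}=\tfrac1{(k+1)(k+2)}-\tfrac1{(k+1)(k+3)}$, to a combination of \eqref{eq:order3simplified} and \eqref{eq:order4simplifiedA}.

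The main obstacle is bookkeeping rather than ingenuity: one must enumerate the fourth-order coupling conditions of the block GARK tableau with no omissions or duplicates, correctly color each tree against the six matrices $\bA^{\{F,F\}},\bA^{\{F,E\}},\bA^{\{F,I\}},\bA^{\{S,F\}},\bA^{\{S,E\}},\bA^{\{S,I\}}$ and three vectors $\bb^{\{F\}},\bb^{\{E\}},\bb^{\{I\}}$, and then verify that the merger $\bA^{\{F,E\}}=\bA^{\{F,I\}}$, the exactness of the fast and base methods, and Theorem~\ref{thm:thirdorder} genuinely leave precisely the six conditions claimed. Extra care is required in the $\Gamma$-versus-$\overline{\Omega}$ splitting, where one must show the \emph{vanishing} of a coupling term rather than its matching a nonzero rational.
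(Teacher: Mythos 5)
Your proposal is correct and follows essentially the same route as the paper's proof: it quotes the fourth-order GARK coupling conditions from \cite{chinomona_implicit-explicit_2021} (reduced via $\bA^{\{F,E\}}=\bA^{\{F,I\}}$), substitutes the Kronecker-structured tables \eqref{eq:AFXtables}--\eqref{eq:btables}, disposes of the conditions not involving $\bA^{\{F,\sigma\}}$ as products of base-ARK and fast tall-tree conditions, reduces the rest to \eqref{eq:order4simplifiedA}--\eqref{eq:order4simplifiedG} by differencing the $E$/$I$ variants to isolate $\Gamma$, and uses the partial-fraction identity to show the $b^{\{F\},T}(A^{\{F\}})^2$-type condition follows from \eqref{eq:order3simplified} and \eqref{eq:order4simplifiedA}. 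Your explicit note that $c^{\{S\}}_{s^{\{S\}}}=1$ (so $e_{s^{\{S\}}}^TC^{\{S\}}=e_{s^{\{S\}}}^T$) is a detail the paper uses implicitly; the only slight slip is that one leftover condition (\eqref{eq:cond4j}) collapses to \eqref{eq:order4simplifiedB} rather than to a combination of the third-order condition and \eqref{eq:order4simplifiedA}, which does not affect the conclusion.
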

\begin{proof}
From \cite{chinomona_implicit-explicit_2021}, there are 26 fourth-order coupling conditions for a third-order GARK method with this structure; this further reduces to 21 when $\bA^{\{F,E\}}=\bA^{\{F,I\}}$. These GARK coupling conditions are:
\begin{subequations}
\label{eq:cond4}
\begin{align}
    \label{eq:cond4a}
    \bb^{\{\sigma\},T} \bC^{\{S\}}\bA^{\{S,F\}}\bc^{\{F\}}&=\frac18 \\
    \label{eq:cond4b}
    \bb^{\{\sigma\},T}\bA^{\{S,\nu\}}\bA^{\{S,F\}}\bc^{\{F\}}&=\frac{1}{24} \\
    \label{eq:cond4c}
    \bb^{\{\sigma\},T}\bA^{\{S,F\}}\bC^{\{F\}}\bc^{\{F\}} &= \frac{1}{12} \\
    \label{eq:cond4d}
    \bb^{\{\sigma\},T}\bA^{\{S,F\}}\bA^{\{F,F\}}\bc^{\{F\}} &= \frac{1}{24}\\
    \label{eq:cond4e}
    \bb^{\{F\},T} \bC^{\{F\}}\bA^{\{F,\sigma\}}\bc^{\{S\}}&=\frac18 \\
    \label{eq:cond4f}
    \bb^{\{F\},T}\bA^{\{F,\sigma\}}\bC^{\{S\}}\bc^{\{S\}}&= \frac{1}{12} \\
    \label{eq:cond4g}
    \bb^{\{\sigma\},T}\bA^{\{S,F\}}\bA^{\{F,\nu\}}\bc^{\{S\}}&=\frac{1}{24} \\
    \label{eq:cond4h}
    \bb^{\{F\},T}\bA^{\{F,F\}}\bA^{\{F,\sigma\}}\bc^{\{S\}}&=\frac{1}{24} \\
    \label{eq:cond4i}
    \bb^{\{F\},T}\bA^{\{F,\sigma\}}\bA^{\{S,\nu\}}\bc^{\{S\}}&= \frac{1}{24} \\
    \label{eq:cond4j}
    \bb^{\{F\},T}\bA^{\{F,\sigma\}}\bA^{\{S,F\}}\bc^{\{F\}}&=\frac{1}{24}
\end{align}
\end{subequations}
for $\sigma,\nu\in\{I,E\}$. $\bc^{\{F\}}$ and $\bc^{\{S\}}$ are defined as in \eqref{eq:cFdef} and \eqref{eq:cSdef}, respectively. We arrive at the conditions \eqref{eq:order4simplified} by checking each of the conditions \eqref{eq:cond4} in turn.

The first four conditions \eqref{eq:cond4a}-\eqref{eq:cond4d} are automatically satisfied:
\begin{align*}
    \frac18 &= \bb^{\{\sigma\},T} \bC^{\{S\}}\bA^{\{S,F\}}\bc^{\{F\}} \\
    &= b^{\{\sigma\},T} C^{\{S\}}(C^{\{S\}}\otimes b^{\{F\},T})(c^{\{S\}}\otimes c^{\{F\}})\\
    &=b^{\{\sigma\},T} C^{\{S\}}C^{\{S\}}c^{\{S\}}\frac{1}{2} = \frac{1}{4}\cdot\frac{1}{2},\\
    \frac{1}{24} &= \bb^{\{\sigma\},T}\bA^{\{S,\nu\}}\bA^{\{S,F\}}\bc^{\{F\}} \\
    &= b^{\{\sigma\},T}A^{\{\nu\}}(C^{\{S\}}\otimes b^{\{F\},T})(c^{\{S\}}\otimes c^{\{F\}}) \\
    &=b^{\{\sigma\},T}A^{\{\nu\}}C^{\{S\}}c^{\{S\}}\frac{1}{2} = \frac{1}{12}\cdot\frac{1}{2},\\
    \frac{1}{12} &= \bb^{\{\sigma\},T}\bA^{\{S,F\}}\bC^{\{F\}}\bc^{\{F\}} \\
    &= b^{\{\sigma\},T}(C^{\{S\}}\otimes b^{\{F\},T})\operatorname{diag}(c^{\{S\}}\otimes c^{\{F\}})(c^{\{S\}}\otimes c^{\{F\}}) \\
    &= b^{\{\sigma\},T}C^{\{S\}}C^{\{S\}}c^{\{S\}}\frac{1}{3} = \frac{1}{4}\cdot\frac{1}{3},\\
    \frac{1}{24} &= \bb^{\{\sigma\},T}\bA^{\{S,F\}}\bA^{\{F,F\}}\bc^{\{F\}} \\
    &= b^{\{\sigma\},T}(C^{\{S\}}\otimes b^{\{F\},T})(C^{\{S\}}\otimes A^{\{F\}})(c^{\{S\}}\otimes c^{\{F\}}) \\
    &= b^{\{\sigma\},T}C^{\{S\}}C^{\{S\}}c^{\{S\}} \frac{1}{6} = \frac{1}{4}\cdot\frac{1}{6}.
\end{align*}

Condition \eqref{eq:cond4e} is not automatically satisfied and simplifies to \eqref{eq:order4simplifiedA}:
\begin{align*}
    \frac{1}{8} &= \bb^{\{F\},T} \bC^{\{F\}}\bA^{\{F,\sigma\}}\bc^{\{S\}} \\
    &= (e_{s^{\{S\}}}\otimes b^{\{F\}})^T \operatorname{diag}(c^{\{S\}}\otimes c^{\{F\}})\left(\sum_{k=0}^{n_\Omega-1}\Omega^{\{k\}}\otimes A^{\{F\}}c^{\{F\}\times k}\right)c^{\{S\}} \\
    &= e_{s^{\{S\}}}^T\left(\sum_{k=0}^{n_\Omega-1}\Omega^{\{k\}}\frac{1}{(k+1)(k+3)}\right)c^{\{S\}}.
\end{align*}

Condition \eqref{eq:cond4f} is not automatically satisfied and simplifies to \eqref{eq:order4simplifiedB}.
\begin{align*}
    \frac{1}{12} &= \bb^{\{F\},T}\bA^{\{F,\sigma\}}\bC^{\{S\}}\bc^{\{S\}} \\
    &= (e_{s^{\{S\}}}\otimes b^{\{F\}})^T\left(\sum_{k=0}^{n_\Omega-1}\Omega^{\{k\}}\otimes A^{\{F\}}c^{\{F\}\times k}\right)C^{\{S\}}c^{\{S\}} \\
    &= e_{s^{\{S\}}}^T\left(\sum_{k=0}^{n_\Omega-1}\Omega^{\{k\}} \frac{1}{(k+1)(k+2)}\right)C^{\{S\}}c^{\{S\}}.
\end{align*}

Condition \eqref{eq:cond4g} is not automatically satisfied and reduces to \eqref{eq:order4simplifiedC} and \eqref{eq:order4simplifiedD} when $\sigma=I,E$, respectively, and both conditions are enforced simultaneously,
\begin{align*}
    \frac{1}{24} &= \bb^{\{\sigma\},T}\bA^{\{S,F\}}\bA^{\{F,\nu\}}\bc^{\{S\}} \\
    &= b^{\{\sigma\},T}(C^{\{S\}}\otimes b^{\{F\}})\left(\sum_{k=0}^{n_\Omega-1}\Omega^{\{k\}}\otimes A^{\{F\}}c^{\{F\}\times k}\right)c^{\{S\}} \\
    &= b^{\{\sigma\},T}C^{\{S\}}\left(\sum_{k=0}^{n_\Omega-1}\Omega^{\{k\}}\frac{1}{(k+1)(k+2)}\right)c^{\{S\}}.
\end{align*}

Condition \eqref{eq:cond4h} is not automatically satisfied but simplifies to \eqref{eq:order3simplified} minus \eqref{eq:order4simplifiedA},
\begin{align*}
    \frac{1}{24} &= \bb^{\{F\},T}\bA^{\{F,F\}}\bA^{\{F,\sigma\}}\bc^{\{S\}} \\
    &= (e_{s^{\{S\}}}\otimes b^{\{F\}})^T(C^{\{S\}}\otimes A^{\{F\}})\left(\sum_{k=0}^{n_\Omega-1}\Omega^{\{k\}}\otimes A^{\{F\}}c^{\{F\}\times k}\right)c^{\{S\}} \\
    &= e_{s^{\{S\}}}^T\left(\sum_{k=0}^{n_\Omega-1}\Omega^{\{k\}}\frac{1}{(k+1)(k+2)(k+3)}\right)c^{\{S\}}.
\end{align*}

Condition \eqref{eq:cond4i} is not automatically satisfied and reduces to \eqref{eq:order4simplifiedF} and \eqref{eq:order4simplifiedG} when $\nu=I,E$, respectively, and both conditions are enforced simultaneously:
\begin{align*}
    \frac{1}{24} &= \bb^{\{F\},T}\bA^{\{F,\sigma\}}\bA^{\{S,\nu\}}\bc^{\{S\}} \\
    &= (e_{s^{\{S\}}}\otimes b^{\{F\}})^T\left(\sum_{k=0}^{n_\Omega-1}\Omega^{\{k\}}\otimes A^{\{F\}}c^{\{F\}\times k}\right)\bA^{\{S,\nu\}}c^{\{S\}} \\
    &= e_{s^{\{S\}}}^T\left(\sum_{k=0}^{n_\Omega-1}\Omega^{\{k\}}\frac{1}{(k+1)(k+2)}\right)\bA^{\{S,\nu\}}c^{\{S\}}.
\end{align*}

Finally, condition \eqref{eq:cond4j} simplifies to the same condition as \eqref{eq:order4simplifiedB},
\begin{align*}
    \frac{1}{24} &= \bb^{\{F\},T}\bA^{\{F,\sigma\}}\bA^{\{S,F\}}\bc^{\{F\}} \\
    &= (e_{s^{\{S\}}}\otimes b^{\{F\}})\left(\sum_{k=0}^{n_\Omega-1}\Omega^{\{k\}}\otimes A^{\{F\}}c^{\{F\}\times k}\right)(C^{\{S\}}\otimes b^{\{F\}})(c^{\{S\}}\otimes c^{\{F\}}) \\
    &= e_{s^{\{S\}}}^T\left(\sum_{k=0}^{n_\Omega-1}\Omega^{\{k\}}\frac{1}{(k+1)(k+2)}\right)C^{\{S\}}c^{\{S\}}\frac{1}{2}.
\end{align*}
\end{proof}

\begin{theorem}[Minimum $n_\Omega$ for third-order accuracy]
An IMEX-MRI-SR requires at least $n_\Omega=2$ for third-order accuracy.
\end{theorem}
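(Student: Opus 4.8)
The plan is to argue by contradiction: assume an IMEX-MRI-SR method with $n_\Omega = 1$ is third-order accurate, and then exhibit two incompatible algebraic constraints on the single available coefficient matrix $\Omega^{\{0\}}$.

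First I would record what $n_\Omega = 1$ forces. By \eqref{eq:imexmrisr_coefficients}, only the $k=0$ term survives in the averaged coefficients, so $\overline{\Omega} = \Omega^{\{0\}}$, and the GARK internal consistency conditions of Section \ref{sec:internalconsistency} reduce to $\overline{\Omega}\mathbbm{1}^{s^{\{S\}}} = \Omega^{\{0\}}\mathbbm{1}^{s^{\{S\}}} = c^{\{S\}}$ (together with $\Gamma\mathbbm{1}^{s^{\{S\}}} = 0^{s^{\{S\}}}$). A third-order method is in particular second-order; and setting $f^{\{F\}}\equiv 0$ the scheme collapses exactly to the slow base ARK method of Section \ref{sec:baseconsistency}, whose explicit table is $(\bA^{\{S,E\}},\bb^{\{E\}},c^{\{E\}}) = (\overline{\Omega},\; \overline{\Omega}^T e_{s^{\{S\}}},\; \overline{\Omega}\mathbbm{1}^{s^{\{S\}}})$ with $c^{\{E\}} = c^{\{S\}}$ by internal consistency. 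Hence that base method must be second-order accurate, and the additive second-order condition $\bb^{\{E\},T}c^{\{E\}} = \frac12$ becomes $e_{s^{\{S\}}}^T\overline{\Omega}\, c^{\{S\}} = \frac12$.

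On the other hand, third-order accuracy requires condition \eqref{eq:order3simplified} of Theorem \ref{thm:thirdorder}. Specializing it to $n_\Omega = 1$ leaves only the $k=0$ term, whose weight is $\frac{1}{(0+1)(0+2)} = \frac12$, so the condition becomes $\frac12\, e_{s^{\{S\}}}^T\Omega^{\{0\}} c^{\{S\}} = \frac16$, i.e. $e_{s^{\{S\}}}^T\overline{\Omega}\, c^{\{S\}} = \frac13$. Combining the two identities gives $\frac12 = e_{s^{\{S\}}}^T\overline{\Omega}\, c^{\{S\}} = \frac13$, a contradiction; therefore no $n_\Omega = 1$ method can be third-order accurate, and $n_\Omega \ge 2$ is necessary.

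The step I expect to require the most care is justifying that the third-order coupling condition \eqref{eq:order3simplified} and the second-order base condition are \emph{both genuinely necessary} and cannot be traded against each other — that is, appealing to GARK/B-series theory (as in \cite{Sandu2015,chinomona_implicit-explicit_2021}) to note that for an internally consistent method the order conditions decouple tree by tree, so the $\Omega$-dependent coupling condition is not merely sufficient. Once that is granted, the argument is just a one-line specialization of already-derived formulas, and the essential point is that a single polynomial coefficient $\Omega^{\{0\}}$ cannot simultaneously hit the $\frac12$ target imposed by second order and the $\frac13$ target imposed by the infinitesimal third-order coupling condition, which is exactly what forces the extra degree of freedom $\Omega^{\{1\}}$.
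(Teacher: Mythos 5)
Your argument is correct and is essentially the paper's own proof: both specialize the third-order coupling condition \eqref{eq:order3simplified} to $n_\Omega=1$ to get $e_{s^{\{S\}}}^T\Omega^{\{0\}}c^{\{S\}}=\tfrac13$, and pit it against the base explicit method's second-order condition $b^{\{E\},T}c^{\{S\}}=\tfrac12$ (via FSAL, $b^{\{E\},T}=e_{s^{\{S\}}}^T\overline{\Omega}=e_{s^{\{S\}}}^T\Omega^{\{0\}}$), yielding the same contradiction. The necessity caveat you flag is handled the same way in the paper, which simply assumes the slow base method satisfies all RK order conditions up to the method's order.
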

\begin{proof}
An IMEX-MRI-SR method with one $\Omega$ matrix, $\Omega^{\{0\}}$, has $\bA^{\{S,E\}}=\overline{\Omega}=\Omega^0$ and, from the FSAL property, $\bb^{\{E\},T}=e_{s^{\{S\}}}^T\overline{\Omega}=e_{s^{\{S\}}}^T\Omega^{\{0\}}$. For an IMEX-MRI-SR method of order $p$, we assume the slow-explicit base method also satisfies all standard Runge--Kutta order conditions up to order $p$. Thus, a third-order IMEX-MRI-SR slow-explicit base method with one $\Omega$ satisfies the second order condition $b^{\{E\},T}c^{\{S\}}=\frac{1}{2}$, which simplifies to
\[
  \frac{1}{2}=\sum_{j=1}^{s^{\{S\}}}\omega^{\{0\}}_{s^{\{S\}},j}c^{\{S\}}_j.
\]
However, the third-order IMEX-MRI-SR coupling condition \eqref{eq:order3simplified} simplifies to
\[
  \frac{1}{3}=\sum_{j=1}^{s^{\{S\}}}\omega^{\{0\}}_{s^{\{S\}},j}c^{\{S\}}_j.
\]
Since these cannot hold simultaneously, a third-order IMEX-MRI-SR method with one $\Omega$ matrix is not possible.  There are no such mutually exclusive conditions for $n_\Omega>1$, and in Section \ref{sec:examplemethods} we introduce third and a fourth order methods with $n_\Omega=2$.
\end{proof}


\section{Linear Stability}
\label{sec:linearstability}

As in \cite{chinomona_implicit-explicit_2021,sandu_class_2019}, we analyze the stability of IMEX-MRI-SR methods when applied to the linear, scalar test problem
\begin{align}
    \label{eq:scalartest}
    y'(t) = \lambda^{\{F\}}y + \lambda^{\{E\}}y + \lambda^{\{I\}}y, \quad t\ge 0, \quad y(0)=1,
\end{align}
with $\lambda^{\{F\}},\lambda^{\{E\}},\lambda^{\{I\}}\in\mathbb{C}^-$. For convenience, we additionally define  $z^{\{F\}}=H\lambda^{\{F\}}$, $z^{\{E\}}=H\lambda^{\{E\}}$, and $z^{\{I\}}=H\lambda^{\{I\}}$.  When applied to \eqref{eq:scalartest}, the IVP \eqref{eq:fastivp} becomes
\begin{align*}
    v_i'(\theta) &= \lambda^{\{F\}}v_i(\theta) + \frac{1}{c^{\{S\}}_i}\sum_{j=1}^{i-1} \omega_{i,j}\left(\frac{\theta}{c^{\{S\}}_iH}\right)(\lambda^{\{E\}}Y_j^{\{S\}}+\lambda^{\{I\}}Y_j^{\{S\}}) \\
    &= \lambda^{\{F\}}v_i(\theta) + \frac{1}{c^{\{S\}}_i}\sum_{j=1}^{i-1}\sum_{k=0}^{n_\Omega-1} \omega^{\{k\}}_{i,j}\left(\frac{\theta}{c^{\{S\}}_iH}\right)^k(\lambda^{\{E\}} Y_j^{\{S\}} + \lambda^{\{I\}} Y_j^{\{S\}})
\end{align*}
for $i=2,...,s^{\{S\}}$, with $\theta\in[0,c^{\{S\}}_iH]$ and $v_i(0)=y_n$. The solution to this at $\theta = c^{\{S\}}_iH$ is
\begin{align*}
    v_i(c^{\{S\}}_iH) &= e^{c^{\{S\}}_iz^{\{F\}}}y_n + (z^{\{E\}}+z^{\{I\}})\sum_{j=1}^{i-1} \sum_{k=0}^{n_\Omega-1} \omega^{\{k\}}_{i,j}\left(\int_0^1e^{c^{\{S\}}_iz^{\{F\}}(1-t)}t^k \mathrm dt\right) Y_j^{\{S\}} \\
    &= \varphi_0(c^{\{S\}}_iz^{\{F\}})y_n + (z^{\{E\}}+z^{\{I\}})\sum_{j=1}^{i-1}\eta_{i,j}(z^{\{F\}}) Y_j^{\{S\}}.
\end{align*}
Here we define $\eta$ as in \cite{chinomona_implicit-explicit_2021} as a function of the fast eigenvalue $z^{\{F\}}$,
\begin{align}
  \eta_{i,j}(z^{\{F\}}) = \sum_{k=0}^{n_\Omega-1}\omega^{\{k\}}_{i,j}\varphi_{k+1}(c^{\{S\}}_iz^{\{F\}}),
\end{align}
where the family of analytical functions $\{\varphi_k\}$ are given by \cite{sandu_class_2019},
\begin{align*}
    \varphi_0(z)=e^z,\quad \varphi_k(z)=\int_0^1 e^{z(1-t)}t^{k-1} \mathrm dt,\quad k\ge 1.
\end{align*}
The $i^{th}$ IMEX-MRI-SR stage for the stability problem \eqref{eq:scalartest} then becomes
\begin{align}
    \label{eq:stabilityslowstage}
    Y_i^{\{S\}} &= \varphi_0(c^{\{S\}}_iz^{\{F\}})y_n + (z^{\{E\}}+z^{\{I\}})\sum_{j=1}^{i-1}\eta_{i,j}(z^{\{F\}})Y_j^{\{S\}} + z^{\{I\}}\sum_{j=1}^{i}\gamma_{i,j}Y_j^{\{S\}}.
\end{align}
Concatenating $Y=[Y_1^{\{S\}}\ \cdots\ Y^{\{S\}}_{s^{\{S\}}}]^T$ and writing \eqref{eq:stabilityslowstage} in matrix form,
\begin{align*}
    Y &= \varphi_0(c^{\{S\}}z^{\{F\}})y_n + (z^{\{E\}}+z^{\{I\}})\eta(z^{\{F\}})Y + z^{\{I\}}\Gamma Y \\
    &= \left(I-(z^{\{E\}}+z^{\{I\}})\eta(z^{\{F\}}) - z^{\{I\}}\Gamma\right)^{-1}\varphi_0(c^{\{S\}}z^{\{F\}})y_n
\end{align*}
where
\[
  \eta(z^{\{F\}}) = \sum_{k=0}^{n_\Omega-1}\operatorname{diag}(\varphi_{k+1}(c^{\{S\}}z^{\{F\}}))\Omega^{\{k\}}.
\]
Thus the stability function for an IMEX-MRI-SR method applied to \eqref{eq:scalartest} is
\begin{equation}
\begin{split}
    R(z^{\{F\}},&z^{\{E\}},z^{\{I\}}) = \\
    &e_{s^{\{S\}}}^T\left(I-(z^{\{E\}}+z^{\{I\}})\eta(z^{\{F\}}) - z^{\{I\}}\Gamma\right)^{-1}\varphi_0(c^{\{S\}}z^{\{F\}}).
\end{split}
\end{equation}

We consider a few definitions of joint stability for IMEX-MRI-SR methods. As with IMEX-MRI-GARK methods, we consider a region that incorporates all three $z$,
\begin{align}
    \label{eq:triplyjointstab}
    \mathcal{J}_{\alpha,\rho,\beta,\xi}=\{z^{\{E\}}\in\mathbb{C}^-:|R(z^{\{F\}},z^{\{E\}},z^{\{I\}})|\le1,\ \forall z^{\{F\}}\in\mathcal{S}^{\{F\}}_{\alpha,\rho},\ \forall z^{\{I\}}\in\mathcal{S}^{\{I\}}_{\beta,\xi}\},
\end{align}
where $\mathcal{S}^{\{\sigma\}}_{\alpha,\rho}=\{z^{\{\sigma\}}\in\mathbb{C}^-:|\operatorname{arg}(z^{\{\sigma\}})-\pi|\le\alpha,\ |z^{\{\sigma\}}|\le\rho\}$.
We note that this definition of joint stability may be overly-restrictive, as it demands the implicit and fast parts of the method to be A($\beta$)- and A($\alpha$)-stable, respectively, for any joint stability region to exist. Thus, we also analyze the implicit and explicit stability regions of IMEX-MRI-SR methods independently, with stability regions defined as:
\begin{align}
    \label{eq:exdoublejointstab}
    \mathcal{J}^{\{I\}}_{\alpha,\rho}&=\{z^{\{I\}}\in\mathbb{C}^-:|R(z^{\{F\}},0,z^{\{I\}})|\le1,\ \forall z^{\{F\}}\in\mathcal{S}^{\{F\}}_{\alpha,\rho}\}, \\
    \label{eq:imdoublejointstab}
    \mathcal{J}^{\{E\}}_{\alpha,\rho}&=\{z^{\{E\}}\in\mathbb{C}^-:|R(z^{\{F\}},z^{\{E\}},0)|\le1,\ \forall z^{\{F\}}\in\mathcal{S}^{\{F\}}_{\alpha,\rho}\}.
\end{align}
These are more consistent with standard stability analyses of ARK methods, wherein explicit and implicit stability are considered separately.

\section{Example Methods}
\label{sec:examplemethods}

We introduce three IMEX-MRI-SR methods of orders 2, 3 and 4, each of which includes an embedding with accuracy one order lower (1, 2 and 3, resp.) for temporal error estimation.  We note that we designed the embedding coefficients with efficiency in mind, so that computation of the embedded solutions do not require an additional implicit solve at the slow time scale (i.e., $\hat{\gamma}_{s^{\{S\}}} = 0$).

When presenting the coefficients for each method we use the notation
\begin{align*}
    \Omega^{\{k\}} =
    \begin{bmatrix}
    {\bf \Omega^{\{k\}}} \\ \hline
    \hat{\omega}^{\{k\}}
    \end{bmatrix},\ \Gamma =
    \begin{bmatrix}
    {\bf \Gamma} \\ \hline
    \hat{\gamma}
    \end{bmatrix},
\end{align*}
where  ${\bf \Omega^{\{k\}}}$ and ${\bf \Gamma}$ are the matrices of coefficients defining the primary method, and $\hat{\omega}^{\{k\}}$ and $\hat{\gamma}$ are the embedding row of coefficients.

\subsection{IMEX-MRI-SR2(1)}
\label{sec:secondordermethod}

The first method is second-order with a first-order embedding.  It has 4 stages, $n_\Omega=1$, and requires 3 slow nonlinear solves per step. The coefficients can be found in Appending \ref{appendix:imexmrisr21}.

To create this method, we used the free coefficients of the primary method to maximize the size of the stability region defined by \eqref{eq:triplyjointstab}.  We used the free coefficients of the embedded method to minimize the value of the C-statistic from Prince and Dormand \cite{PRINCE198167},
\begin{align}
    \label{eq:cstatistic}
    C^{(p+1)}=\frac{\|\hat{\tau}^{(p+1)}-\tau^{(p+1)}\|_2}{\|\hat{\tau}^{(p)}\|_2},
\end{align}
where $\tau^{(p)}$ is the vector of order condition residuals at order $p$ for the primary method, $\hat{\tau}^{(p)}$ is the vector of order condition residuals at order $p$ for the embedded method, and $p$ is the order of the primary method (in this case, $p=2$). The C-statistic gives an estimate of how much the error stemming from unsatisfied $(p+1)$-order conditions of the primary method corrupts the error estimate provided by the embedded method.

This method has large, robust stability regions. Figures \ref{fig:imexmrisr2110fangletriplejoint} and \ref{fig:imexmrisr2145fangletriplejoint} show $\mathcal{J}_{\alpha,10^2,\beta,10^4}$ for $\alpha\in\{10^\circ,45^\circ\}$ and varying $\beta$, along with the explicit slow base method's stability region.  We see that the multirate method has a stability region essentially identical to the explicit slow base method when $\alpha=10^\circ$ for any value of $z^{\{I\}}$. When $\alpha$ grows to $45^\circ$ we can see a decay of the stability region associated with the $\beta=85^\circ$, while the regions associated with smaller $\beta$ values experience negligible decay, if any.

Figure \ref{fig:imexmrisr21explicitdoublejoint} shows $\mathcal{J}^{\{E\}}_{\alpha,10^2}$ for varying $\alpha$. We see that when $z^{\{I\}}=0$, the stability region is again large, with only a slight decay in area when $\alpha=85^\circ$.

Figure \ref{fig:imexmrisr21explicitdoublejoint} shows $\mathcal{J}^{\{I\}}_{\alpha,10^2}$ for varying $\alpha$.  When $z^{\{E\}}=0$, the multirate method is A-stable for $0^\circ \le \alpha \le 45^\circ$. When $\alpha$ grows further to $65^\circ$, the region decays slightly but is still approximately A($80^\circ$)-stable. When $\alpha=85^\circ$, the stability region decays to an enclosed bubble similar to the regions in Figure \ref{fig:imexmrisr21explicitdoublejoint}.

\begin{figure}[htb]
\centering
\begin{subfigure}[b]{0.48\textwidth}
    \centering
    \includegraphics[width=\textwidth]{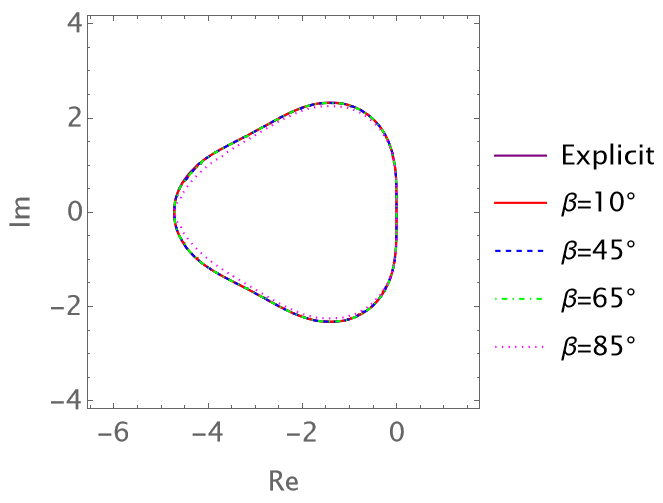}
    \caption{$\mathcal{J}_{10^\circ,10^2,\alpha,10^4}$}
    \label{fig:imexmrisr2110fangletriplejoint}
\end{subfigure}
\hfill
\begin{subfigure}[b]{0.48\textwidth}
    \centering
    \includegraphics[width=\textwidth]{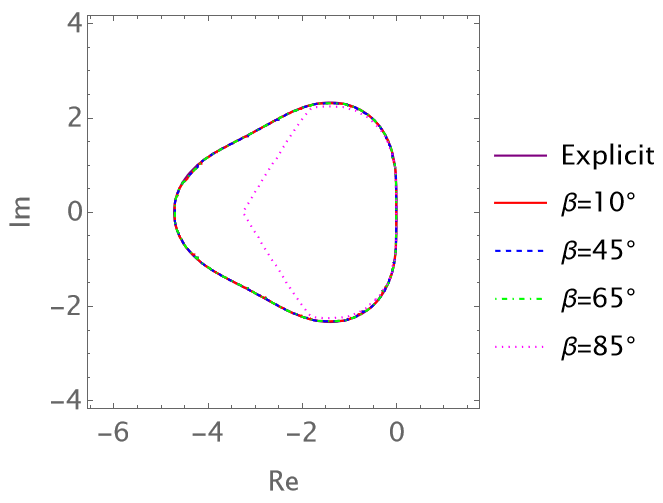}
    \caption{$\mathcal{J}_{45^\circ,10^2,\alpha,10^4}$}
    \label{fig:imexmrisr2145fangletriplejoint}
\end{subfigure}
\newline
\begin{subfigure}[b]{0.48\textwidth}
    \centering
    \includegraphics[width=\textwidth]{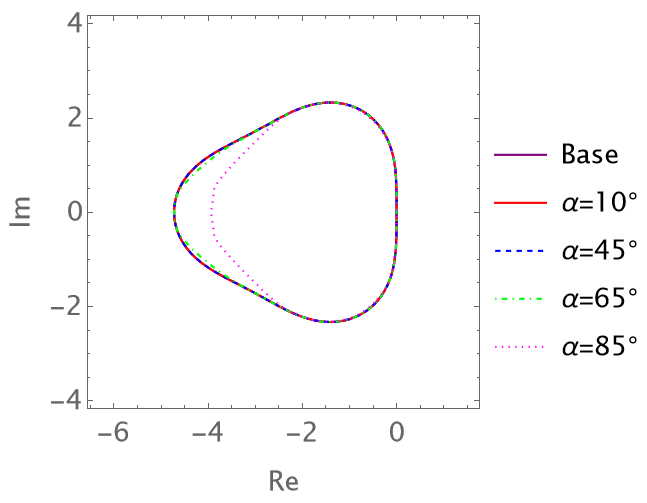}
    \caption{$\mathcal{J}^{\{E\}}_{\alpha,10^2}$}
    \label{fig:imexmrisr21explicitdoublejoint}
\end{subfigure}
\hfill
\begin{subfigure}[b]{0.48\textwidth}
    \centering
    \includegraphics[width=\textwidth]{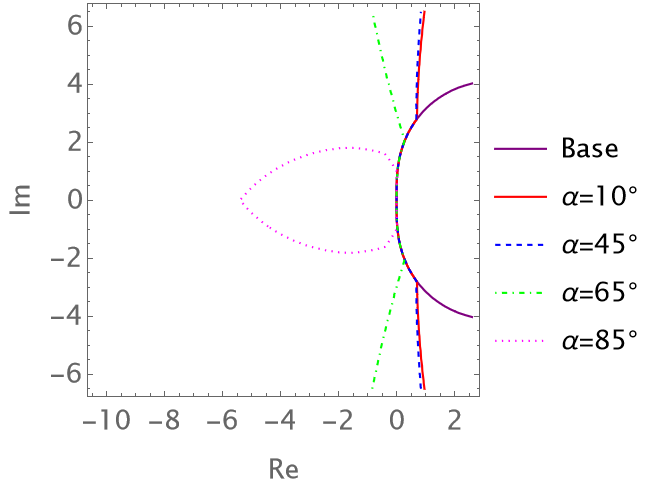}
    \caption{$\mathcal{J}^{\{I\}}_{\alpha,10^2}$}
    \label{fig:imexmrisr21implicitdoublejoint}
\end{subfigure}
\caption{Joint Stability Regions for IMEX-MRI-SR2(1)}
\label{fig:imexmrisr21stab}
\end{figure}

\subsection{IMEX-MRI-SR3(2)}
\label{sec:thirdordermethod}

Our second method is third-order with a second-order embedding.  It has 5 stages, $n_\Omega=2$, and requires 4 nonlinear solves per step. The coefficients can be found in Appendix \ref{appendix:imexmrisr32}.

To create this method, we used the free coefficients for both the method and embedding as described in Section \ref{sec:secondordermethod}, this time using the C-statistic \eqref{eq:cstatistic} with $p=3$.

Figures \ref{fig:imexmrisr3210fangletriplejoint} and \ref{fig:imexmrisr3245fangletriplejoint} show the relatively large joint stability regions $\mathcal{J}_{\alpha,10^2,\beta,10^4}$ for $\alpha=\{10^\circ,45^\circ\}$, respectively, along with the explicit slow base method's stability region.  Again, at $\alpha=10^\circ$ and lower values of $\beta$, the method is essentially as stable as the explicit slow base method. The areas of these stability regions decrease for higher values of $\beta$.  When $\alpha$ grows to $45^\circ$ in Figure \ref{fig:imexmrisr3245fangletriplejoint}, the stability regions shrink slightly in comparison with $\alpha=10^\circ$ from Figure \ref{fig:imexmrisr3210fangletriplejoint}.

Figures \ref{fig:imexmrisr32explicitdoublejoint} and \ref{fig:imexmrisr32implicitdoublejoint} show $\mathcal{J}^{\{E\}}_{\alpha,10^2}$ and $\mathcal{J}^{\{I\}}_{\alpha,10^2}$ for varying $\alpha$, respectively.  We see that when $z^{\{I\}}=0$, the stability region is reasonably large for smaller value of $\alpha$, but the region shrinks as $\alpha$ grows; however, even for $\alpha=85^\circ$, the region retains a good extent along the imaginary axis.  Similar to the second-order method, when $z^{\{E\}}=0$, the stability region is A-stable for most values of $\alpha$, only losing A-stability for $\alpha=85^\circ$, where the region decays to a rather large, yet enclosed, bubble.

\begin{figure}[htb]
\centering
\begin{subfigure}[b]{0.48\textwidth}
    \centering
    \includegraphics[width=\textwidth]{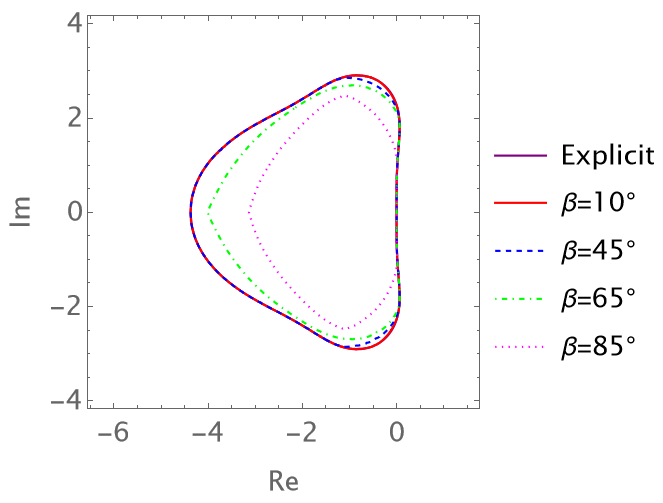}
    \caption{$\mathcal{J}_{10^\circ,10^2,\beta,10^4}$}
    \label{fig:imexmrisr3210fangletriplejoint}
\end{subfigure}
\hfill
\begin{subfigure}[b]{0.48\textwidth}
    \centering
    \includegraphics[width=\textwidth]{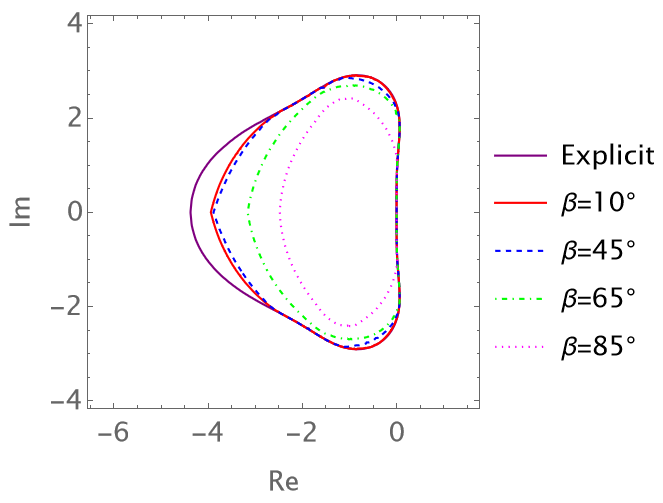}
    \caption{$\mathcal{J}_{45^\circ,10^2,\beta,10^4}$}
    \label{fig:imexmrisr3245fangletriplejoint}
\end{subfigure}
\newline
\begin{subfigure}[b]{0.48\textwidth}
    \centering
    \includegraphics[width=\textwidth]{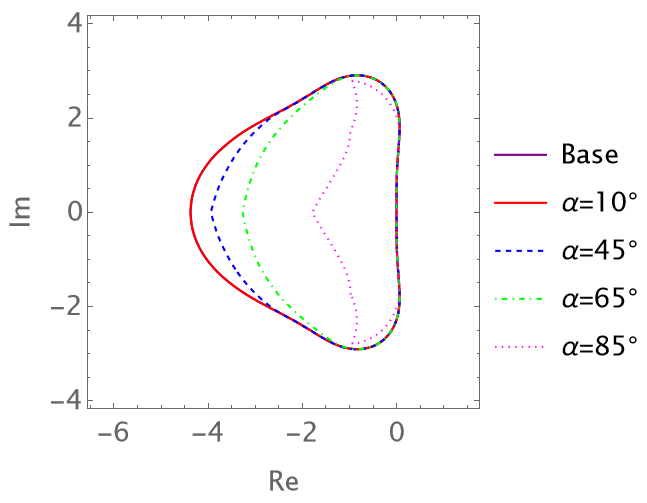}
    \caption{$\mathcal{J}^{\{E\}}_{\alpha,10^2}$}
    \label{fig:imexmrisr32explicitdoublejoint}
\end{subfigure}
\hfill
\begin{subfigure}[b]{0.48\textwidth}
    \centering
    \includegraphics[width=\textwidth]{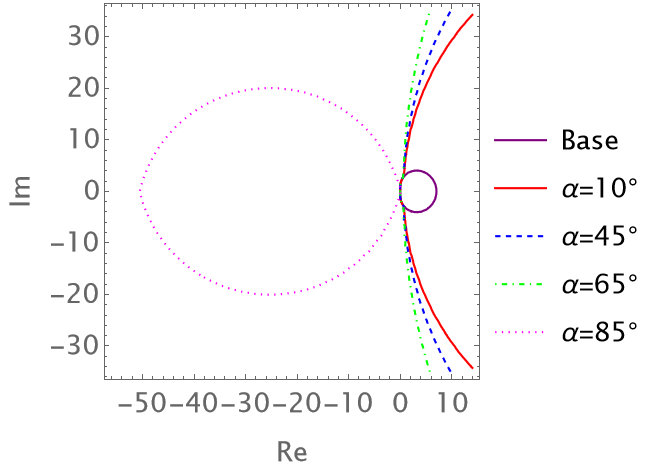}
    \caption{$\mathcal{J}^{\{I\}}_{\alpha,10^2}$}
    \label{fig:imexmrisr32implicitdoublejoint}
\end{subfigure}
\caption{Joint Stability Regions for IMEX-MRI-SR3(2)}
\label{fig:imexmrisr32stab}
\end{figure}

\subsection{IMEX-MRI-SR4(3)}
\label{sec:fourthordermethod}

Our final method is fourth-order with a third-order embedding, has 7 stages, $n_\Omega=2$, and requires 5 nonlinear solves per step. The coefficients can be found in Appendix \ref{appendix:imexmrisr43}.

Due to the number of order conditions involved when simultaneously solving all IMEX-MRI-SR coupling conditions and base ARK order conditions at fourth-order, we based this method off of an existing 4(3) ARK method, LIRK4 \cite{calvo2001linearly}.  Since this was not stiffly accurate, then as discussed in Remark \ref{remark:existingarkbase} we padded the $A^{\{E\}}$ and $A^{\{I\}}$ matrices with the $b$ vector, i.e.,
\begin{equation}
    \bA^{\{S,E\}}=\begin{bmatrix}
    A^{\{E\}} & 0^{\{6\}} \\
    b^T & 0
    \end{bmatrix},\ \bA^{\{S,I\}}=\begin{bmatrix}
    A^{\{I\}} & 0^{\{6\}} \\
    b^T & 0
    \end{bmatrix},
\end{equation}
where $0^{\{6\}}\in\mathbb{R}^6$ is all zero.

Since the last row of $\bA^{\{S,E\}}$ and $\bA^{\{S,I\}}$ both equal $b^T$, the last row of $\Gamma$ equals zero, as follows from the definitions \eqref{eq:ASEtable} and \eqref{eq:ASItable}. When $\bA^{\{S,I\}}$ has a zero in the bottom right entry (and therefore $\Gamma$ has a zero in the bottom right entry, from \eqref{eq:ASItable}), there is no nonlinear solve required to compute this last stage and therefore the updated time step solution.  We believe that this negatively affects stability, as we will show in Figure \ref{fig:imexmrisr43stab}.

As before, we used the free variables of the IMEX-MRI-SR method to optimize stability. This method had an empty joint stability region defined by \eqref{eq:triplyjointstab} for all of our attempts to choose or optimize values of the free variables, so we instead optimized the size of the stability region defined by \eqref{eq:imdoublejointstab}.  To optimize the embedded method, we minimized the 2-norm of the fourth-order condition residuals, $\|\hat{\tau}^{(4)}\|_2$, to reduce the overall error in the embedded solution.  We note that our previous approach of minimizing the C-statistic \eqref{eq:cstatistic} was not possible since we do not yet have the fifth-order IMEX-MRI-SR coupling conditions.

Figure \ref{fig:imexmrisr43explicitdoublejoint} shows the stability regions $\mathcal{J}^{\{E\}}_{\alpha,1}$ for varying $\alpha$. Unlike the lower-order methods, this method's explicit stability region never fully matches that of the explicit base method. Similarly to the other methods, as $\alpha$ grows the stability region shrinks.

Figure \ref{fig:imexmrisr43implicitdoublejoint} shows the stability regions $\mathcal{J}^{\{I\}}_{\alpha,1}$ for varying $\alpha$. Again unlike the lower-order methods, these regions are not A-stable for $\alpha \ne 0$.
%
%
Because $\mathcal{J}^{\{I\}}_{\alpha,\rho}$ is never A-stable for this method, the stability region \eqref{eq:triplyjointstab} is always empty.  We suspect that this is primarily due to the lack of an implicit solve in the last stage of the method.  In future work we plan to investigate this issue in more detail to better understand the factors that contribute to IMEX-MRI-SR joint stability.

\begin{figure}[htb]
\centering
\begin{subfigure}[b]{0.48\textwidth}
    \centering
    \includegraphics[width=\textwidth]{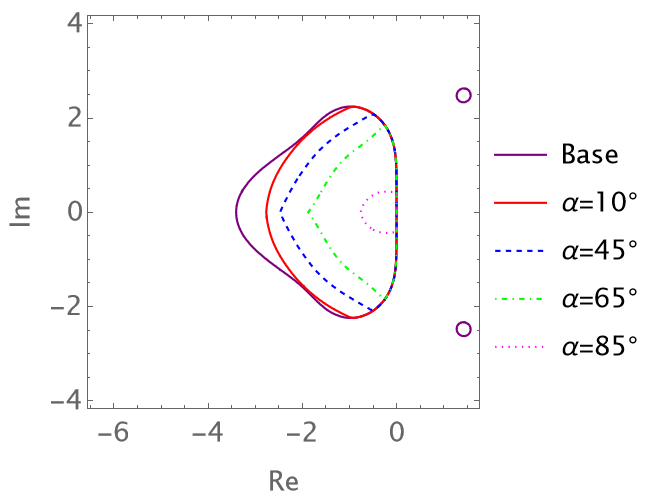}
    \caption{$\mathcal{J}^{\{E\}}_{\alpha,10^2}$}
    \label{fig:imexmrisr43explicitdoublejoint}
\end{subfigure}
\hfill
\begin{subfigure}[b]{0.48\textwidth}
    \centering
    \includegraphics[width=\textwidth]{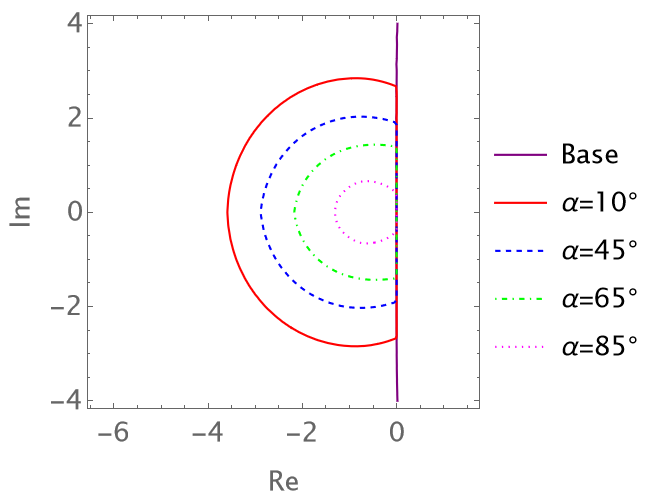}
    \caption{$\mathcal{J}^{\{I\}}_{\alpha,10^2}$}
    \label{fig:imexmrisr43implicitdoublejoint}
\end{subfigure}
\caption{Joint Stability Regions for IMEX-MRI-SR4(3)}
\label{fig:imexmrisr43stab}
\end{figure}

\section{MERK Methods as Explicit IMEX-MRI-SR Methods}
\label{sec:merkasimexmrisr}

In \cite{luan_new_2020}, Luan et al.~define MERK methods with orders of accuracy spanning two through five by explicitly defining the abscissae $c_i^{\{S\}}$ and the forcing functions $g_i(\theta)$.  Due to their similar structure to IMEX-MRI-SR methods, we may analyze MERK methods using our theory from Section \ref{sec:orderconditions}.  Because MERK methods are always explicit, their IMEX-MRI-SR $\Gamma$ matrices will be all zero.  We recall that MERK methods are defined under an assumption that the fast function is linear, $f^{\{F\}}(t,y) = \mathcal{L}y$, but that the slow function can be arbitrary.  It is thus natural to assume that MERK methods might only satisfy a subset of the IMEX-MRI-SR order conditions, potentially failing those that handle nonlinearity in $f^{\{F\}}$.

\subsection{MERK2}
\label{sec:merk2}
Converting the three-stage second-order accurate MERK2 method from \cite{luan_new_2020} into IMEX-MRI-SR form, we have
\begin{align}
    \Omega^{\{0\}} &= \begin{bmatrix}
    0 & 0 & 0  \\
    c^{\{S\}}_2 & 0 & 0 \\
    1& 0 & 0
    \end{bmatrix},\quad \Omega^{\{1\}} = \begin{bmatrix}
    0 & 0 & 0  \\
    0 & 0 & 0  \\
    -1/c^{\{S\}}_2 & 1/c^{\{S\}}_2 & 0
    \end{bmatrix}.
\end{align}
Interestingly, these coefficients (along with $\Gamma = 0$) satisfy \emph{all} coupling conditions up to third order, and the slow base method determined by $\overline{\Omega}$ satisfies all conditions up to order two. Thus, we expect MERK2 to have second-order accuracy, \emph{even for nonlinear $f^{\{F\}}$}.  We confirm this with numerical tests involving nonlinear $f^{\{F\}}$ in Section \ref{sec:numericalresults}, where we use $c^{\{S\}}_2=\frac12$ since it was unspecified in \cite{luan_new_2020}.

Figure \ref{fig:imexmrisrmerk2explicitdoublejoint} shows the stability regions for MERK2. Because this method is explicit, we only plot $\mathcal{J}^{\{E\}}_{\alpha,10^2}$.  Notably, these regions nearly match that of the base explicit method for most values of $\alpha$ examined.

\begin{figure}[htb]
\centering
\begin{subfigure}[b]{0.48\textwidth}
    \centering
    \includegraphics[width=\textwidth]{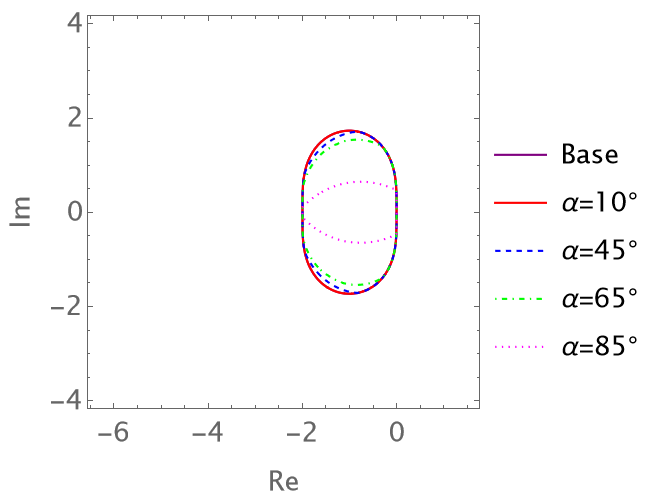}
    \caption{MERK2}
    \label{fig:imexmrisrmerk2explicitdoublejoint}
\end{subfigure}
\hfill
\begin{subfigure}[b]{0.48\textwidth}
    \centering
    \includegraphics[width=\textwidth]{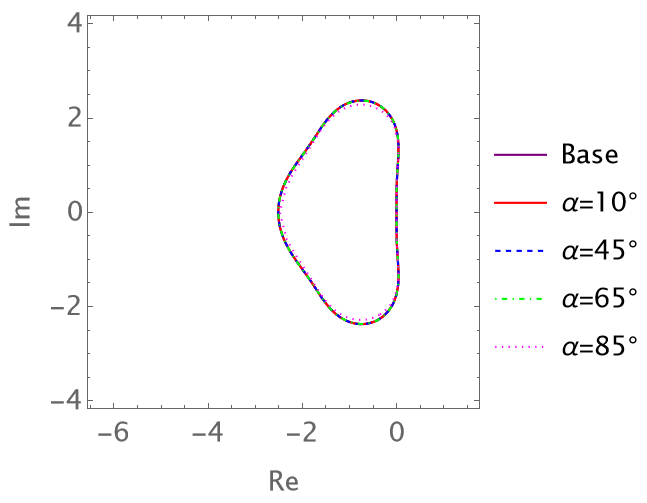}
    \caption{MERK3}
    \label{fig:imexmrisrmerk3explicitdoublejoint2}
\end{subfigure}
\caption{$\mathcal{J}^{\{E\}}_{\alpha,10^2}$ Regions for MERK2 and MERK3}
\label{fig:imexmrisrmerk23stab}
\end{figure}

\subsection{MERK3}
\label{sec:merk3}
Converting the four-stage, third-order accurate, MERK3 from \cite{luan_new_2020} to an IMEX-MRI-SR method, we have
\begin{align}
    \Omega^{\{0\}} &= \begin{bmatrix}
    0 & 0 & 0 & 0 \\
    c^{\{S\}}_2 & 0 & 0 & 0\\
    \frac{2}{3} & 0 & 0 & 0 \\
    1 & 0 & 0 & 0
    \end{bmatrix},\quad \Omega^{\{1\}} = \begin{bmatrix}
    0 & 0 & 0 & 0 \\
    0 & 0 & 0 & 0 \\
    -\frac{2}{3c^{\{S\}}_2} & \frac{2}{3c^{\{S\}}_2} & 0 & 0\\
    -\frac{3}{2} & 0 & \frac{3}{2} & 0
    \end{bmatrix}.
\end{align}
These satisfy all coupling conditions up to third order, and the corresponding slow base method satisfies all conditions up to order three.  Thus similarly to MERK2, we expect it to show third-order accuracy on problems with nonlinear $f^{\{F\}}$.  We again confirm this result on numerical tests in Section \ref{sec:numericalresults} using $c^{\{S\}}_2=1/2$.

Figure \ref{fig:imexmrisrmerk3explicitdoublejoint2} shows the stability regions for MERK3, $\mathcal{J}^{\{E\}}_{\alpha,10^2}$.  Notably, these regions show no degradation of stability as $\alpha$ is increased.

\subsection{MERK4}
\label{sec:merk4}

We may express MERK4 as an IMEX-MRI-SR method with 7 stages and $n_\Omega=3$.  The corresponding coefficients are provided in Appendix \ref{appendix:merk4}.

We find that this method satisfies all IMEX-MRI-SR coupling conditions through order four, and its slow base method satisfies all order conditions up through fourth order, so long as $c^{\{S\}}_6=(3-4c^{\{S\}}_5)/(4-6c^{\{S\}}_5)$. This restriction on $c^{\{S\}}_6$ is satisfied by the choices in \cite{luan_new_2020} of $c^{\{S\}}_2=1/2$, $c^{\{S\}}_3=1/2$, $c^{\{S\}}_4=1/3$, $c^{\{S\}}_5=5/6$, and $c^{\{S\}}_6=1/3$.  Thus like before, we expect this method to demonstrate  fourth-order accuracy for nonlinear $f^{\{F\}}$, which we confirm numerically in Section \ref{sec:numericalresults}.  Interestingly, the joint stability regions $\mathcal{J}^{\{E\}}_{\alpha,10^2}$ for MERK4 are empty.  However, given the reliability of this method in \cite{luan_new_2020} and our own results from Section \ref{sec:numericalresults}, we believe that these empty regions more strongly indicate a deficiency in these definitions of joint stability than any actual issues with MERK4 itself.

\subsection{MERK5}
\label{sec:merk5}

The IMEX-MRI-SR method that corresponds with MERK5 has 11 stages and $n_\Omega=4$. These coefficients are given in Appendix \ref{appendix:merk5}.  When analyzing this method, we find that when
\begin{align*}
    c^{\{S\}}_9=\frac{12-15c^{\{S\}}_{10}-15c^{\{S\}}_8+20 c^{\{S\}}_{10} c^{\{S\}}_8}{15-20c^{\{S\}}_{10}-20c^{\{S\}}_8+30c^{\{S\}}_{10}c^{\{S\}}_8}
\end{align*}
the method satisfies all IMEX-MRI-SR coupling conditions up through fourth-order, and its slow base method satisfies all order conditions up through fifth-order.  This condition is satisfied by the choice in \cite{luan_new_2020} of $c^{\{S\}}_2=1/2$, $c^{\{S\}}_3=1/2$, $c^{\{S\}}_4=1/3$, $c^{\{S\}}_5=1/2$, $c^{\{S\}}_6=1/3$, $c^{\{S\}}_7=1/4$, $c^{\{S\}}_8=7/10$, $c^{\{S\}}_9=1/2$, and $c^{\{S\}}_{10}=2/3$, therefore we expect it to have at least fourth-order accuracy without restriction on the linearity of $f^{\{F\}}$.  As we do not have fifth-order IMEX-MRI-SR order conditions, we cannot check these for MERK5; however, our numerical tests in Section \ref{sec:numericalresults} indeed show fifth-order convergence for nonlinear $f^{\{F\}}$.  As with MERK4, the MERK5 joint stability regions $\mathcal{J}^{\{E\}}_{\alpha,10^2}$ are empty.

\section{Numerical Results}
\label{sec:numericalresults}

In this section we examine the convergence rates for our newly-proposed IMEX-MRI-SR methods from Section \ref{sec:examplemethods}, as well as for MERK methods applied to problems with nonlinear $f^{\{F\}}$.  We also compare the efficiency of our IMEX-MRI-SR methods against both IMEX-MRI-GARK and legacy Lie-Trotter and Strang-Marchuk methods from \cite{chinomona_implicit-explicit_2021}.

\subsection{KPR}
\label{sec:kprproblemsec}

The Kv{\ae}rn{\o}-Prothero-Robinson problem is a coupled system of IVPs which has been widely used for testing multirate algorithms, since it is nonlinear, non-autonomous, includes stiffness and multirate tuning parameters, and has an analytical solution.  We use the same formulation and partitioning as in \cite{chinomona_implicit-explicit_2021},
\begin{equation}
  \label{eq:kprproblem}
  \begin{split}
    \begin{bmatrix} u' \\ v' \end{bmatrix} &= \Lambda
    \begin{bmatrix} \frac{-3+u^2-\cos(\beta t)}{2u} \\ \frac{-2+u^2-\cos(t)}{2v} \end{bmatrix}
    - \begin{bmatrix} \frac{\beta\sin(\beta t)}{2u} \\ \frac{\sin(t)}{2v} \end{bmatrix},
    \quad
    \begin{bmatrix} u(0) \\ v(0) \end{bmatrix} =
    \begin{bmatrix} 2 \\ \sqrt{3} \end{bmatrix} \\
    \Lambda &= \begin{bmatrix}
      \lambda^{\{F\}} & \frac{1-\varepsilon}{\alpha}(\lambda^{\{F\}}-\lambda^{\{S\}}) \\
      -\alpha\varepsilon(\lambda^{\{F\}}-\lambda^{\{S\}}) & \lambda^{\{S\}}
    \end{bmatrix}
  \end{split}
\end{equation}
for $t\in[0,{5\pi}/{2}]$, with parameters $\lambda^{\{F\}}=-10$, $\lambda^{\{S\}}=-1$, $\varepsilon=0.1$, $\alpha=1$, and $\beta=20$. This problem has solution
\begin{align}
    u(t) = \sqrt{3+\cos(\beta t)},\quad v(t) = \sqrt{2+\cos(t)}.
\end{align}
We partition the problem as
\begin{equation}
  \label{eq:kprpartitioning}
  \begin{split}
    f^{\{E\}}&=\begin{bmatrix} 0 \\ \frac{\sin(t)}{2v} \end{bmatrix},
    \quad f^{\{I\}}=\begin{bmatrix} 0 & 0 \\ 0 & 1 \end{bmatrix}
    \Lambda
    \begin{bmatrix} \frac{-3+u^2-\cos(\beta t)}{2u} \\ \frac{-2+u^2-\cos(t)}{2v} \end{bmatrix} \\
    f^{\{F\}}&=\begin{bmatrix} 1 & 0 \\ 0 & 0 \end{bmatrix} \Lambda
    \begin{bmatrix} \frac{-3+u^2-\cos(\beta t)}{2u} \\ \frac{-2+u^2-\cos(t)}{2v} \end{bmatrix}.
  \end{split}
\end{equation}

In Figure \ref{fig:kprconvergenceplots}, we plot the convergence as $H$ is refined for MERK methods and implicit-explicit methods, including all provided IMEX-MRI-SR methods from Section \ref{sec:examplemethods}, IMEX-MRI-GARK3(a,b), IMEX-MRI-GARK4, Lie-Trotter and Strang-Marchuk.  We see that all methods converge at their expected rates.  Notably, as expected from Section \ref{sec:merkasimexmrisr}, the MERK methods show no convergence issues even though $f^{\{F\}}$ in \eqref{eq:kprpartitioning} is nonlinear.

In these tests, we combined multirate methods with explicit inner Runge--Kutta methods of the same order, with the only exception of pairing a second-order inner solver with the first-order Lie-Trotter method.  Each of Lie-Trotter, Strang-Marchuk, and IMEX-MRI-SR2(1) used the second-order Heun method given by the Butcher table $\begin{array}{c|cc} 0 & 0 & 0 \\ 1 & 1 & 0 \\ \hline & 1/2 & 1/2 \end{array}$. The IMEX-MRI-SR3(2) and IMEX-MRI-GARK3(a,b) methods used the third-order method by Bogacki and Shampine \cite{bogacki_32_1989}. The IMEX-MRI-SR4(3) and IMEX-MRI-GARK4 methods used the fourth-order method by Zonneveld \cite{zonneveld1963automatic}.

We measured error at 10 equally-spaced points in the time interval.  The estimated convergence rates for each method, using a least squares fit of log(Max Error) versus log($H$), are in the legend parentheses. For the MERK convergence tests, we used $H=\pi/2^k$, $k=2,...,9$ and for the implicit-explicit method convergence tests, we used $H=\pi/2^k$, $k=4,...,11$. For all tests we used fast time step size $h=H/10$. The implicit-explicit methods used a standard Newton-Raphson method with a banded linear solver for the implicit solves.

\begin{figure}[htb]
\centering
\begin{subfigure}[b]{0.45\textwidth}
    \centering
    \includegraphics[width=\textwidth]{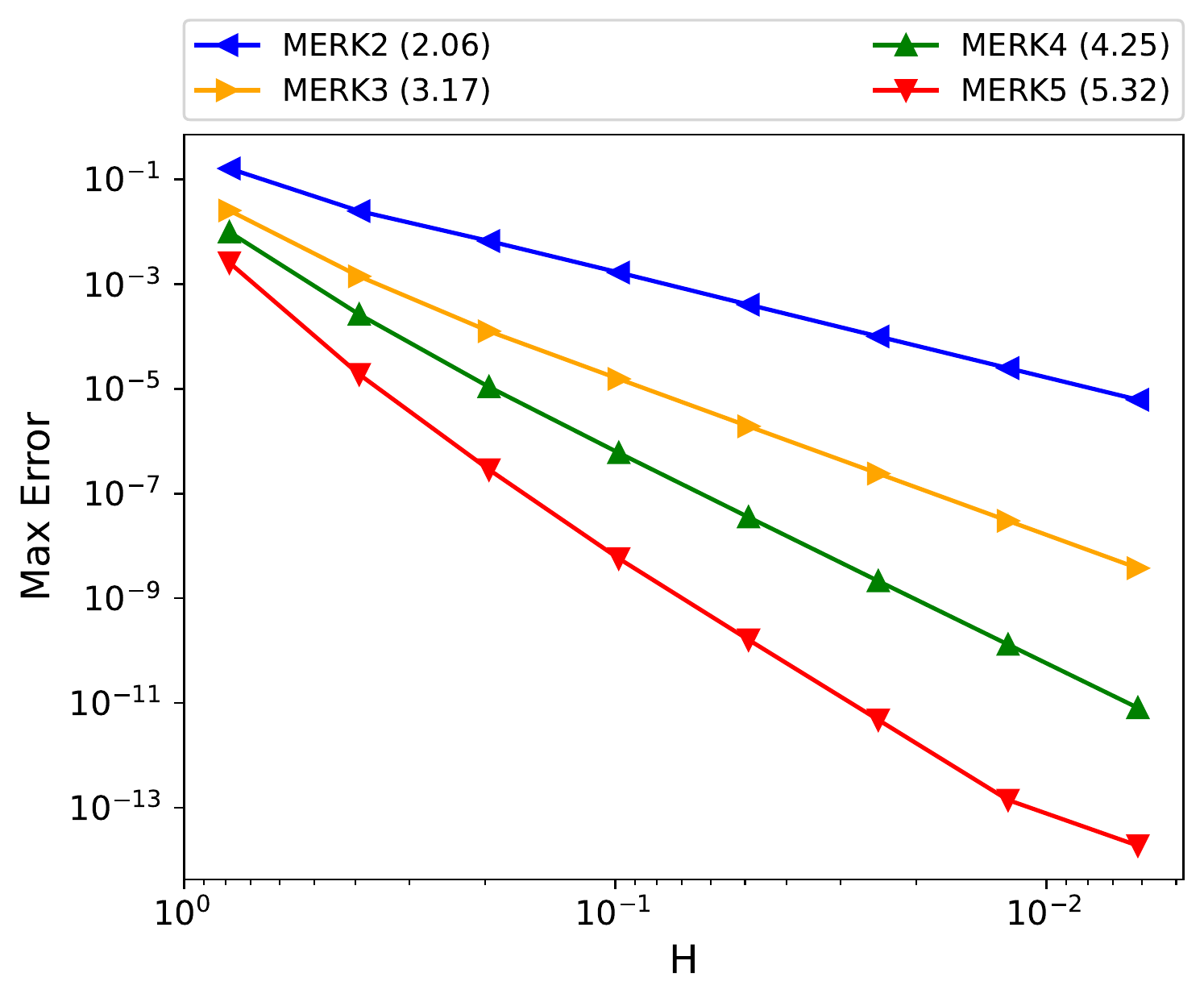}
\end{subfigure}
\hfill
\begin{subfigure}[b]{0.45\textwidth}
    \centering
    \includegraphics[width=\textwidth]{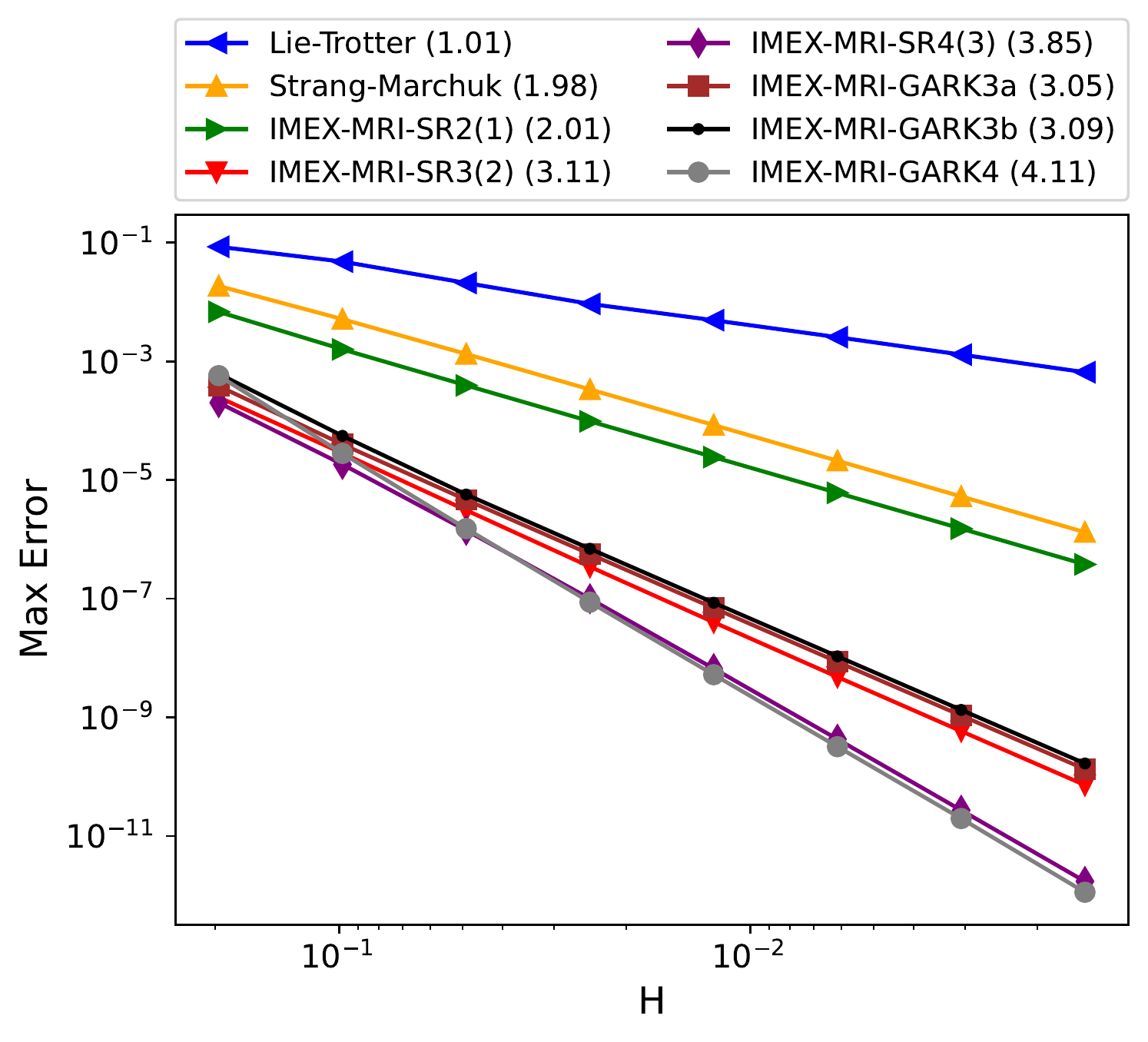}
\end{subfigure}
\caption{Convergence for the KPR test problem \eqref{eq:kprproblem} for MERK methods (left) and implicit-explicit methods (right) using the partitioning \eqref{eq:kprpartitioning}. All methods converge at the expected theoretical rates (with measured convergence rates in parentheses), including MERK methods using the given nonlinear fast partition.}
\label{fig:kprconvergenceplots}
\end{figure}

\subsection{Stiff Brusselator}
\label{sec:brusselatorproblemsec}

The stiff brusselator problem is an advection-reaction-diffusion system of nonlinear partial differential equations. It is a modification to the standard brusselator \cite{hairer1996stiff} used in \cite{chinomona_implicit-explicit_2021}, from which we use the same formulation and partitioning:
\begin{align*}
    u_t &= \alpha_u u_{xx} + \rho_u u_x + r_u(a - (w-1)u+u^2v), \\
    v_t &= \alpha_v v_{xx} + \rho_v v_x + r_v(wu-u^2v), \\
    w_t &= \alpha_w w_{xx} + \rho_w w_x + r_w(\frac{b-w}{\varepsilon} - wu),
\end{align*}
for $t\in[0,3]$ and $x\in[0,1]$, with stationary boundary conditions
\begin{align*}
    u_t(t,0) = u_t(t,1) = v_t(t,0) = v_t(t,1) = w_t(t,0) = w_t(t,1),
\end{align*}
and initial conditions
\begin{align*}
    u(0,x) &= a+0.1\sin(\pi x), \\
    v(0,x) &= b/a+0.1\sin(\pi x), \\
    w(0,x) &= b+0.1\sin(\pi x).
\end{align*}
We partition the problem as
\begin{align*}
  f^{\{I\}} = \begin{bmatrix} \alpha_u u_{xx} \\ \alpha_v v_{xx} \\ \alpha_w w_{xx} \end{bmatrix},\quad f^{\{E\}} = \begin{bmatrix} \rho_u u_x \\ \rho_v v_x \\ \rho_w w_x \end{bmatrix},\quad
  f^{\{F\}} = \begin{bmatrix} r_u(a - (w-1)u+u^2v) \\ r_v(wu-u^2v) \\ r_w(\frac{b-w}{\varepsilon} - wu) \end{bmatrix},
\end{align*}
and use second order centered difference approximations for all spatial derivative operators. This problem has no analytical solution, so we used MATLAB's \emph{ode15s} with tight tolerances $AbsTol=10^{-14}$ and $RelTol=2.5\times 10^{-14}$ to generate reference solutions.

\subsubsection{Fixed Time Step}
\label{sec:brusselatorfixedstepsec}
In this section, we compare runtime efficiency of the splitting, IMEX-MRI-SR, and IMEX-MRI-GARK methods using fixed time step sizes. We use the same fixed parameters $\alpha_u=\alpha_v=\alpha_w=10^{-2}$, $\rho_u=\rho_v=\rho_w=10^{-3}$, $r_u=r_v=r_w=1$, $a=0.6$, $b=2$, and $\varepsilon=10^{-2}$ with initial conditions
\begin{equation*}
u(0) = a + 0.1\sin(\pi x), \; v(0) = b/a + 0.1\sin(\pi x), \; w(0) = b + 0.1\sin(\pi x),
\end{equation*}
for 201 and 801 grid points as in \cite{chinomona_implicit-explicit_2021}. All methods used fast time steps of $h=H/10$ and all methods used the same inner methods and implicit algebraic solvers as in Section \ref{sec:kprproblemsec}.

\begin{figure}[h]
\centering
\includegraphics[width=0.9\textwidth]{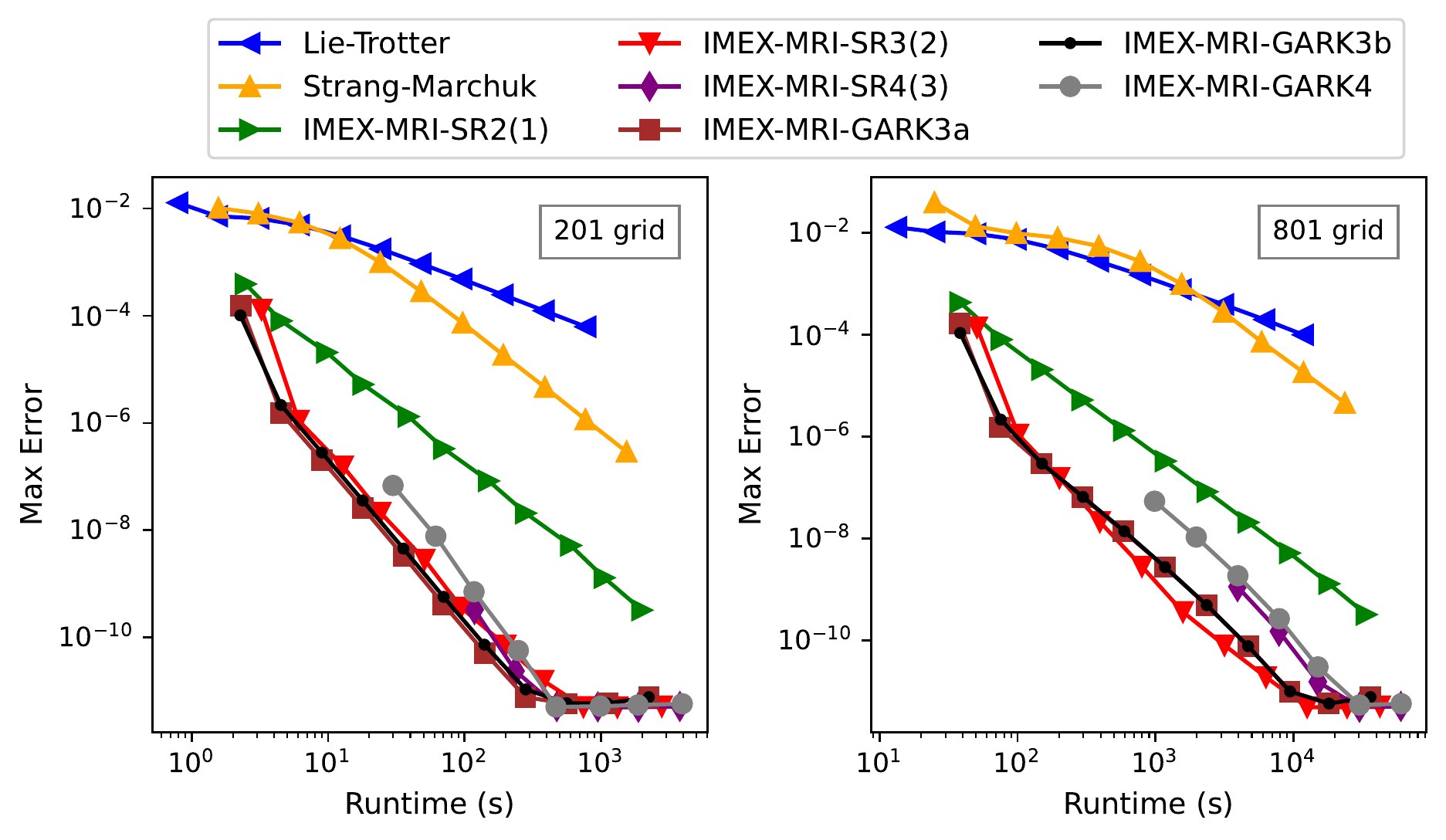}
\caption{Efficiency for stiff brusselator problem using 201 grid points (left) and 801 grid points (right). Estimated least-squares convergence rates before settling at the error-floor are (0.76,1.59,2.00,3.09,3.00,3.36,3.25,3.41) and (0.72,1.24,2.01,2.90,1.90,2.71,2.69,2.42) for the 201 and 801 grids, respectively, for Lie-Trotter, Strang-Marchuk, IMEX-MRI-SR2(1), IMEX-MRI-SR3(2), IMEX-MRI-SR4(3), IMEX-MRI-GARK3a, IMEX-MRI-GARK3b, IMEX-MRI-GARK4.}
\label{fig:brussruntimescombined}
\end{figure}

In Figure \ref{fig:brussruntimescombined} we plot the observed maximum solution error over ten equally spaced points in the time interval using step sizes of $H=0.1\cdot 2^{-k}$, $k=0,...,10$.  Both splitting methods and both fourth-order methods experience significant order reduction, with IMEX-MRI-SR4(3) taking the biggest hit in reducing an entire order of accuracy for the 201 grid, and two orders of accuracy for the 801 grid. The second- and third-order IMEX-MRI-SR and IMEX-MRI-GARK methods all achieve their expected order for the 201 grid. The third-order accurate methods experience only slight order reduction for the 801 grid while IMEX-MRI-SR2(1) remains steady at its expected order. All methods exhibit an error floor of approximately $10^{-11}$, that is likely caused by the accuracy of the reference solution.

The stiffness of this problem highlights the stability limitations of the fourth order methods, IMEX-MRI-SR4(3) and IMEX-MRI-GARK4, which was observed in \cite{chinomona_implicit-explicit_2021}. For the 201 grid, IMEX-MRI-SR4(3) and IMEX-MRI-GARK4 were unstable for step sizes greater than $1/320$ and $1/80$ respectively. For the 801 grid, IMEX-MRI-SR4(3) and IMEX-MRI-GARK4 were unstable for step sizes greater than $1/640$ and $1/160$, respectively.

We can see that IMEX-MRI-SR2(1) is \emph{far} more efficient than the first- and second-order splitting methods, providing errors two to three orders of magnitude smaller for the same runtimes. It also has a steady rate of error decrease as runtime increases, while the splitting methods show periods of stagnation at larger step sizes.

The third-order and fourth-order methods tend to have similar efficiency on this problem, attaining similar error for similar runtimes. The third-order IMEX-MRI-GARK methods have a slight edge for the 201 grid and for some error ranges in the 801 grid, but IMEX-MRI-SR3(2) becomes the most efficient method for the 801 grid for errors between approximately $10^{-7}$ and $10^{-11}$. The third-order methods are all more efficient than the fourth-order methods, where IMEX-MRI-SR4(3) maintains a slight but consistent edge over IMEX-MRI-GARK4.

\subsubsection{Adaptive Time Step}
\label{sec:brusselatoradaptivestepsec}

In this section, we compare work-precision efficiency for the IMEX-MRI-SR methods in the adaptive time step context. Because the splitting and IMEX-MRI-GARK methods do not have embeddings, we omit them from these tests. We use the Constant-Constant controller from \cite{fishcontroller2022} with the recommended parameters $k_1=0.42$, $k_2=0.44$, and the recommended fast error measurement strategy, LASA-mean. This controller adapts $H$ and $M$, such that the inner time step size $h=H/M$, in a similar manner to a standard I-controller.

We use time-varying parameters $\alpha_u=\alpha_v=\alpha_w=\rho_u=\rho_v=\rho_w=6\times10^{-5}+5\times10^{-5}\cos(\pi t)$, $r_u=r_v=r_w=0.6+0.5\cos(4\pi t)$ adapted from \cite{fishcontroller2022} with the same fixed parameters $a=1$, $b=3.5$, $\varepsilon=10^{-3}$ and initial conditions
\begin{equation*}
u(0) = 1.2 + 0.1\sin(\pi x), \; v(0) = 3.1 + 0.1\sin(\pi x), \; w(0) = 3 + 0.1\sin(\pi x),
\end{equation*} with
101 grid points. All IMEX-MRI-SR methods used the same inner methods and implicit algebraic solvers as in Section \ref{sec:kprproblemsec}.

\begin{figure}[h]
\centering
\includegraphics[width=0.85\textwidth]{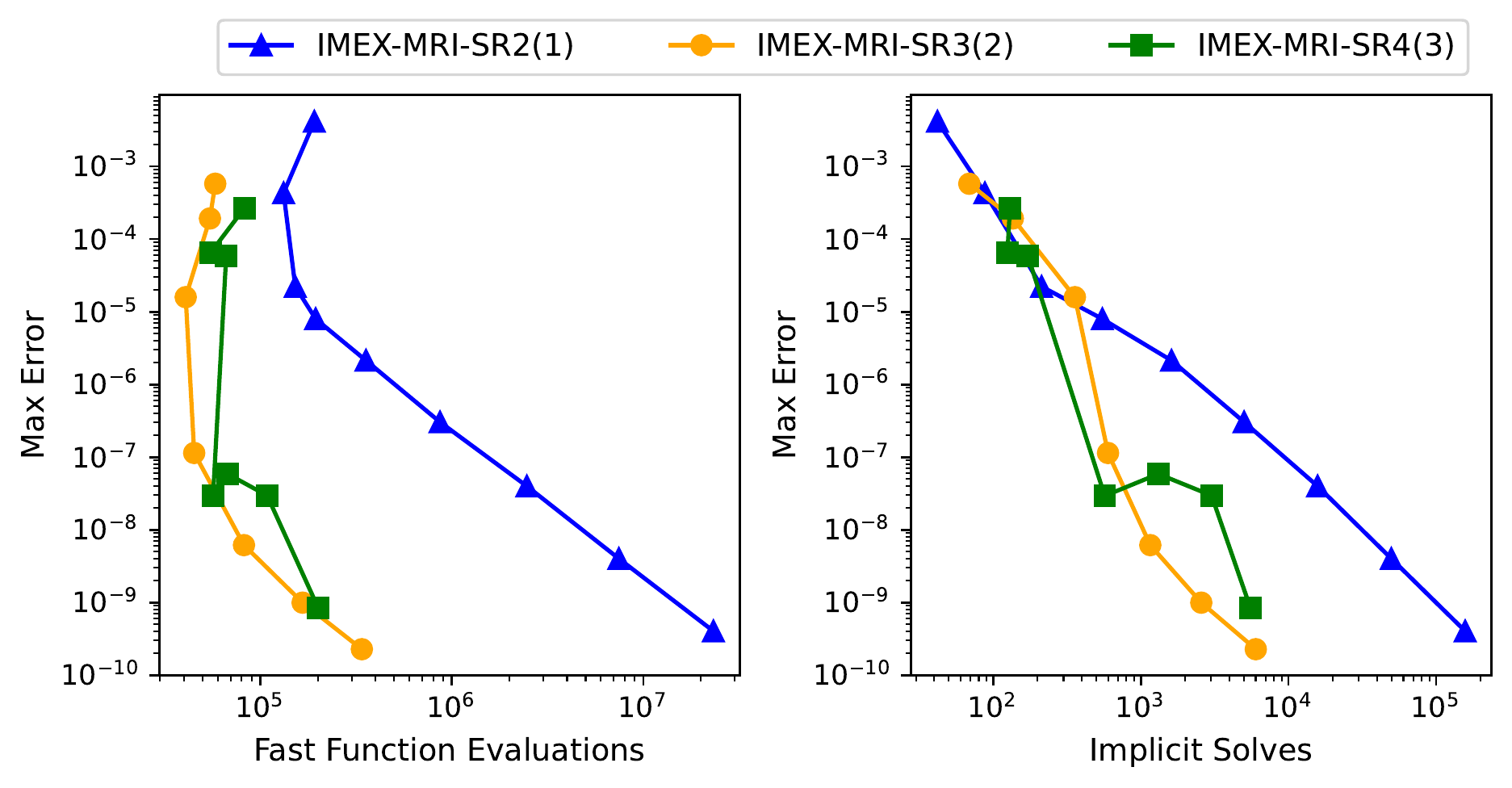}
\caption{Fast function evaluations (left) and total implicit solves (right) versus the observed maximum error for the stiff brusselator problem.}
\label{fig:brussworkprecision}
\end{figure}

In Figure \ref{fig:brussworkprecision} we plot the number of fast function evaluations and the number of implicit solves, good indicators of overall cost at the fast and slow timescales, versus the observed maximum solution error over ten equally spaced points in the time interval when running with controller tolerance values of $\tol=10^{-k}$, $k=1,...,9$. We use $\tol^{\{S\}}=\tol^{\{F\}}=\frac{1}{2}\tol$ in our tests for simplicity. We note that IMEX-MRI-SR3(2) failed the tests with $\tol=10^{-3},10^{-5}$ and IMEX-MRI-SR4(3) failed the tests with $\tol=10^{-4},10^{-5}$ due to getting stuck in oscillations between successful and failed steps.

We can see that IMEX-MRI-SR2(1) is much less efficient than the higher order methods in terms of fast function evaluations (farthest from the bottom-left corner), where IMEX-MRI-SR3(2) is generally the most efficient (closest to the bottom-left corner) across the range of errors. IMEX-MRI-SR4(3) is generally comparable to IMEX-MRI-SR3(2) in terms of fast function evaluations but occasionally gets ``stuck,'' providing approximately the same error value for different total fast function evaluations, depending on the value of $\tol$. This is likely an indication that IMEX-MRI-SR4(3) has a low quality embedding which provides inaccurate error estimates, possibly stemming from the embedding coefficients being defined by minimizing $\|\hat{\tau}^{(4)}\|_2$ rather than the C-statistic.

The total number of implicit solves is comparable across all three methods for errors larger than approximately $10^{-5}$. Below that, IMEX-MRI-SR3(2) and IMEX-MRI-SR4(3) are again comparable with IMEX-MRI-SR2(1) falling further behind. We see the same phenomenon of IMEX-MRI-SR4(3) getting ``stuck'' at certain error values, achieving the same error for varying total implicit solves.



\section{Conclusions}
\label{sec:conclusions}

We introduce a new class of multirate time integration methods which builds off of previous work in  IMEX-MRI-GARK and MERK methods, serving to improve various aspects of each.  The proposed class of IMEX-MRI-SR methods are flexible, allowing IMEX treatment of the slow time scale while allowing the use of any viable IVP solver for the fast time scale.  These methods remove the sorted abscissae requirement of IMEX-MRI-GARK methods since they start each internal stage at the beginning of the time step, thereby dramatically simplifying their order conditions and allowing introduction of embeddings.  IMEX-MRI-SR methods can also be viewed as an extension to MERK methods, in that these allow implicitness at the slow time scale and nonlinearity at the fast time scale.

The convergence theory of IMEX-MRI-SR methods leverages GARK theory \cite{Sandu2015}, through which we established order conditions for methods of orders one through four.  Due to their structural similarity to IMEX-MRI-SR methods, we leveraged these new order conditions to analyze the previously-proposed MERK methods without their restriction to linear fast partitions.  Using this analytical framework, we provided the first theoretical justification that these MERK methods (at least up through fourth order) should retain their high orders of accuracy even on problems with nonlinear fast partitions.

We analyzed joint stability as in \cite{chinomona_implicit-explicit_2021}, as well as in simplified implicit- or explicit-only senses of joint stability. With these reduced definitions of stability, we gain some insight into what happens when joint stability regions break down.

Using this theoretical framework, we provided three new IMEX-MRI-SR methods. These included a second-order method with a first-order embedding, and a third-order method with a second-order embedding, both derived from scratch by solving all order conditions simultaneously.  We additionally provided a fourth-order method with a third-order embedding, that we derived from the existing LIRK4 ARK method \cite{calvo2001linearly}.

We experimentally examined convergence for the three new methods and the existing MERK methods on the KPR test problem, finding that they all converge at their expected orders of accuracy. We also provided experimental efficiency results on the stiff brusselator problem, a nonlinear advection-reaction-diffusion system of PDEs, where we found that our methods are competitive with IMEX-MRI-GARK methods and even surpass them in some cases.  We also found that IMEX-MRI-SR2(1) is vastly more efficient than the similarly second-order Strang-Marchuk operator splitting method. We found that in the context of adaptive time stepping, the third- and fourth-order IMEX-MRI-SR methods were comparable in work required for achieving a given error, while the second-order method lagged behind.

More work remains to be done on IMEX-MRI-SR methods. Higher order conditions can be derived which, while tedious due to the number of conditions at higher orders, is tractable due to the use of Kronecker product identities.  Further analysis should be done on the factors that most strongly affect joint stability and what conditions, if any, can be enforced to ensure a non-empty joint stability region. Additionally, more methods should be derived, in particular an embedded fourth-order method with improved joint stability over the method provided here.

\section{Acknowledgments}
\label{sec:acknowledgments}

The authors would like to thank Vu Thai Luan for his help in gaining deeper understanding of MERK methods. We would also like to thank the SMU Center for Research Computing for use of the Maneframe2 computing cluster, where we performed all simulations
reported in this work


\bibliographystyle{siamplain}
\bibliography{references}

\newpage
\appendix

\section{IMEX-MRI-SR2(1) Coefficients}
\label{appendix:imexmrisr21}

\begin{align}
    c^{\{S\}}=\begin{bmatrix}
    0 \\ \frac35 \\ \frac{4}{15} \\ 1
    \end{bmatrix},\ \Omega^{\{0\}} = \begin{bmatrix}
    0 & 0 & 0 & 0 \\ \frac{3}{5} & 0 & 0 & 0 \\ \frac{14}{165} & \frac{2}{11} & 0 & 0 \\ -\frac{13}{54} & \frac{137}{270} & \frac{11}{15} & 0 \\ \hline -\frac14 & \frac12 & \frac{3}{4} & 0
    \end{bmatrix},\ \Gamma=\begin{bmatrix}
    0 & 0 & 0 & 0 \\ -\frac{11}{23} & \frac{11}{23} & 0 & 0 \\ -\frac{6692}{52371} & -\frac{18355}{52371} & \frac{11}{23} & 0 \\ \frac{11621}{90666} & -\frac{215249}{226665} & \frac{17287}{50370} & \frac{11}{23} \\ \hline -\frac{31}{12} & -\frac{1}{6} & \frac{11}{4} & 0
    \end{bmatrix}.
\end{align}

\section{IMEX-MRI-SR3(2) Coefficients}
\label{appendix:imexmrisr32}

\begin{equation}
\begin{split}
    c^{\{S\}}&=\begin{bmatrix}
    0 \\ \frac{23}{34} \\ \frac{4}{5} \\ \frac{17}{15} \\ 1
    \end{bmatrix},\ \Omega^{\{0\}} = \begin{bmatrix}
    0 & 0 & 0 & 0 & 0\\ \frac{23}{34} & 0 & 0 & 0 & 0 \\ \frac{71}{70} & -\frac{3}{14} & 0 & 0 & 0 \\ \frac{124}{1155} & \frac{4}{7} & \frac{5}{11} & 0 & 0 \\ \frac{162181}{187680} & \frac{119}{1380} & \frac{11}{32} & -\frac{5}{17} & 0 \\ \hline \frac{76355}{74834} & -\frac{46}{31} & \frac{67}{34} & -\frac{36}{71} & 0
    \end{bmatrix},\\ \Omega^{\{1\}}&=\begin{bmatrix}
    0 & 0 & 0 & 0 & 0 \\ 0 & 0 & 0 & 0 & 0 \\ -\frac{14453}{63825} & \frac{14453}{63825} & 0 & 0 & 0 \\ -\frac{2101267877}{1206582300} & -\frac{2476735438}{301645575} & -\frac{13575085}{2098404} & 0 & 0 \\ -\frac{762580446799}{588660102960} & \frac{11083240219}{4328383110} & -\frac{211274129}{100368304} & \frac{89562055}{106641323} & 0 \\ \hline -\frac{3732974}{2278035} & \frac{13857574}{2278035} & -\frac{52}{9} & \frac{4}{3} & 0
    \end{bmatrix} \\
    \Gamma &= \begin{bmatrix}
    0 & 0 & 0 & 0 & 0 \\ -\frac47 & \frac47 & 0 & 0 & 0 \\ -\frac{2707004}{3127425} & \frac{919904}{3127425} & \frac47 & 0 & 0 \\ \frac{852879271}{703839675} & -\frac{1575000496}{703839675} & \frac{5}{11} & \frac47 & 0 \\ \frac{43136869}{2019912118} & -\frac{73810600}{1009956059} & -\frac{17653551}{87822266} & -\frac{13993902}{43911133} & \frac47 \\ \hline -\frac{179}{4140} & \frac{799}{14490} & \frac{1}{14} & -\frac{1}{12} & 0
    \end{bmatrix}
\end{split}
\end{equation}

\section{IMEX-MRI-SR4(3) Coefficients}
\label{appendix:imexmrisr43}

\begin{equation}
\begin{split}
    c^{\{S\}}&=\begin{bmatrix}
    0 \\ \frac14 \\ \frac34 \\ \frac{11}{20} \\ \frac12 \\ 1 \\ 1
    \end{bmatrix},\ \Omega^{\{0\}} = \begin{bmatrix}
    0 & 0 & 0 & 0 & 0 & 0 & 0 \\
    \frac14 & 0 & 0 & 0 & 0 & 0 & 0 \\
    \frac98 & -\frac38 & 0 & 0 & 0 & 0 & 0 \\
    \frac{187}{2340} & \frac79 & -\frac{4}{13} & 0 & 0 & 0 & 0 \\
    \frac{64}{165} & \frac16 & -\frac35 & \frac{6}{11} & 0 & 0 & 0 \\
    \frac{1816283}{549120} & -\frac29 & -\frac{4}{11} & -\frac16 & -\frac{2561809}{1647360} & 0 & 0 \\
    0 & \frac{7}{11} & -\frac{2203}{264} & \frac{10825}{792} & -\frac{85}{12} & \frac{841}{396} & 0 \\ \hline
    \frac{1}{400} & \frac{49}{12} & \frac{43}{6} & -\frac{7}{10} & -\frac{85}{12} & -\frac{2963}{1200} & 0
    \end{bmatrix} \\
    \Omega^{\{1\}} &= \begin{bmatrix}
    0 & 0 & 0 & 0 & 0 & 0 & 0 \\
    0 & 0 & 0 & 0 & 0 & 0 & 0 \\
    -\frac{11}{4} & \frac{11}{4} & 0 & 0 & 0 & 0 & 0 \\
    -\frac{1228}{2925} & -\frac{92}{225} & \frac{808}{975} & 0 & 0 & 0 & 0 \\
    -\frac{2572}{2805} & \frac{167}{255} & \frac{199}{136} & -\frac{1797}{1496} & 0 & 0 & 0 \\
    -\frac{1816283}{274560} & \frac{253}{36} & -\frac{23}{44} & \frac{76}{3} & -\frac{20775791}{823680} & 0 & 0 \\
    0 & \frac{107}{132} & \frac{1289}{88} & -\frac{9275}{792} & 0 & -\frac{371}{99} & 0 \\ \hline
    -\frac{1}{200} & -\frac{137}{24} & -\frac{235}{16} & \frac{1237}{80} & 0 & \frac{2963}{600} & 0
    \end{bmatrix} \\
    \Gamma &= \begin{bmatrix}
    0 & 0 & 0 & 0 & 0 & 0 & 0 \\
    -\frac{1}{4} & \frac{1}{4} & 0 & 0 & 0 & 0 & 0 \\
    \frac14 & -\frac12 & \frac14 & 0 & 0 & 0 & 0 \\
    \frac{13}{100} & -\frac{7}{30} & -\frac{11}{75} & \frac14 & 0 & 0 & 0 \\
    \frac{6}{85} & -\frac{301}{1360} & -\frac{99}{544} & \frac{45}{544} & \frac14 & 0 & 0 \\
    0 & -\frac94 & -\frac{19}{48} & -\frac{75}{16} & \frac{85}{12} & \frac14 & 0 \\
    0 & 0 & 0 & 0 & 0 & 0 & 0 \\ \hline
    0 & 0 & 0 & 0 & 0 & 0 & 0
    \end{bmatrix}
\end{split}
\end{equation}

\section{MERK4 IMEX-MRI-SR Coefficients}
\label{appendix:merk4}
We list the non-zero coefficients of the MERK5 method's IMEX-MRI-SR formulation below.
\begin{equation}
\begin{split}
    \Omega^{\{0\}}_{i,1} &= c^{\{S\}}_i,\ i=1,...,s^{\{S\}}
\end{split}
\end{equation}

\begin{equation}
\begin{split}
    \Omega^{\{1\}}_{3,1} &= -\Omega^{\{1\}}_{3,2} ,\  \Omega^{\{1\}}_{3,2} = \frac{c^{\{S\}\times 2}_3}{c^{\{S\}}_2},\  \Omega^{\{1\}}_{4,1} = -\Omega^{\{1\}}_{4,2} ,\  \Omega^{\{1\}}_{4,2} = \frac{c^{\{S\}\times 2}_4}{c^{\{S\}}_2},\\
    \Omega^{\{1\}}_{5,1} &= -(\Omega^{\{1\}}_{5,3}+\Omega^{\{1\}}_{5,4}),\  \Omega^{\{1\}}_{5,3} = -\frac{c^{\{S\}}_4c^{\{S\}\times 2}_5}{c^{\{S\}}_3(c^{\{S\}}_3-c^{\{S\}}_4)},\  \Omega^{\{1\}}_{5,4} =\frac{c^{\{S\}}_3c^{\{S\}\times 2}_5}{c^{\{S\}}_4(c^{\{S\}}_3-c^{\{S\}}_4)} \\
    \Omega^{\{1\}}_{6,1} &= -(\Omega^{\{1\}}_{6,3}+\Omega^{\{1\}}_{6,4}),\  \Omega^{\{1\}}_{6,3} = -\frac{c^{\{S\}}_4c^{\{S\}\times 2}_6}{c^{\{S\}}_3(c^{\{S\}}_3-c^{\{S\}}_4)},\  \Omega^{\{1\}}_{6,4} =\frac{c^{\{S\}}_3c^{\{S\}\times 2}_6}{c^{\{S\}}_4(c^{\{S\}}_3-c^{\{S\}}_4)} \\
    \Omega^{\{1\}}_{7,1} &= -(\Omega^{\{1\}}_{7,5}+\Omega^{\{1\}}_{7,6}),\  \Omega^{\{1\}}_{7,5} = -\frac{c^{\{S\}}_6}{c^{\{S\}}_5(c^{\{S\}}_5-c^{\{S\}}_6)},\  \Omega^{\{1\}}_{7,6} =\frac{c^{\{S\}}_5}{c^{\{S\}}_6(c^{\{S\}}_5-c^{\{S\}}_6)}
\end{split}
\end{equation}

\begin{equation}
\begin{split}
    \Omega^{\{1\}}_{5,1} &= -(\Omega^{\{1\}}_{5,3}+\Omega^{\{1\}}_{5,4}),\  \Omega^{\{1\}}_{5,3} = \frac{1}{c^{\{S\}}_3(c^{\{S\}}_3-c^{\{S\}}_4)},\  \Omega^{\{1\}}_{5,4} =-\frac{1}{c^{\{S\}}_4(c^{\{S\}}_3-c^{\{S\}}_4)} \\
    \Omega^{\{1\}}_{6,1} &= -(\Omega^{\{1\}}_{6,3}+\Omega^{\{1\}}_{6,4}),\  \Omega^{\{1\}}_{6,3} = \frac{1}{c^{\{S\}}_3(c^{\{S\}}_3-c^{\{S\}}_4)},\  \Omega^{\{1\}}_{6,4} =-\frac{1}{c^{\{S\}}_4(c^{\{S\}}_3-c^{\{S\}}_4)}  \\
    \Omega^{\{1\}}_{7,1} &= -(\Omega^{\{1\}}_{7,5}+\Omega^{\{1\}}_{7,6}),\  \Omega^{\{1\}}_{7,5} = \frac{1}{c^{\{S\}}_5(c^{\{S\}}_5-c^{\{S\}}_6)},\  \Omega^{\{1\}}_{7,6} =-\frac{1}{c^{\{S\}}_6(c^{\{S\}}_5-c^{\{S\}}_6)}
\end{split}
\end{equation}

This method attains general fourth-order accuracy when \begin{align}
    c^{\{S\}}_6=\frac{3-4c^{\{S\}}_5}{4-6c^{\{S\}}_5}.
\end{align}

\newpage
\section{MERK5 IMEX-MRI-SR Coefficients}
\label{appendix:merk5}

We list the non-zero coefficients of the MERK5 method's IMEX-MRI-SR formulation below.
\begin{equation}
    \Omega^{\{0\}}_{i,1} = c^{\{S\}}_i,\ i=1,...,s^{\{S\}}
\end{equation}

\begin{equation}
\begin{split}
    \Omega^{\{1\}}_{3,1} &= -\Omega^{\{1\}}_{3,2} ,\  \Omega^{\{1\}}_{3,2} = c^{\{S\}\times 2}_3\alpha_2,\\
    \Omega^{\{1\}}_{4,1} &= -\Omega^{\{1\}}_{4,2} ,\  \Omega^{\{1\}}_{4,2} = c^{\{S\}\times 2}_4\alpha_2,\\
    \Omega^{\{1\}}_{5,1} &= -\left(\Omega^{\{1\}}_{5,3}+\Omega^{\{1\}}_{5,4}\right),\  \Omega^{\{1\}}_{5,3} = c^{\{S\}\times 2}_5\alpha_3,\  \Omega^{\{1\}}_{5,4} =c^{\{S\}\times 2}_5\alpha_4 \\
    \Omega^{\{1\}}_{6,1} &= -\left(\Omega^{\{1\}}_{6,3}+\Omega^{\{1\}}_{6,4}\right),\  \Omega^{\{1\}}_{6,3} = c^{\{S\}\times 2}_6\alpha_3,\  \Omega^{\{1\}}_{6,4} =c^{\{S\}\times 2}_6\alpha_4 \\
    \Omega^{\{1\}}_{7,1} &= -\left(\Omega^{\{1\}}_{7,3}+\Omega^{\{1\}}_{7,4}\right),\  \Omega^{\{1\}}_{7,3} = c^{\{S\}\times 2}_7\alpha_3,\  \Omega^{\{1\}}_{7,4} =c^{\{S\}\times 2}_7\alpha_4 \\
    \Omega^{\{1\}}_{8,1} &= -\left(\Omega^{\{1\}}_{8,5}+\Omega^{\{1\}}_{8,6}+\Omega^{\{1\}}_{8,7}\right),\ \Omega^{\{1\}}_{8,5} = c^{\{S\}\times 2}_8\alpha_5,\ \Omega^{\{1\}}_{8,6} = c^{\{S\}\times 2}_8\alpha_6,\ \Omega^{\{1\}}_{8,7} = c^{\{S\}\times 2}_8\alpha_7 \\
    \Omega^{\{1\}}_{9,1} &= -\left(\Omega^{\{1\}}_{9,5}+\Omega^{\{1\}}_{9,6}+\Omega^{\{1\}}_{9,7}\right),\ \Omega^{\{1\}}_{9,5} = c^{\{S\}\times 2}_9\alpha_5,\ \Omega^{\{1\}}_{9,6} = c^{\{S\}\times 2}_9\alpha_6,\ \Omega^{\{1\}}_{9,7} = c^{\{S\}\times 2}_9\alpha_7 \\
    \Omega^{\{1\}}_{10,1} &= -\left(\Omega^{\{1\}}_{10,5}+\Omega^{\{1\}}_{10,6}+\Omega^{\{1\}}_{10,7}\right),\ \Omega^{\{1\}}_{10,5} = c^{\{S\}\times 2}_{10}\alpha_5,\ \Omega^{\{1\}}_{10,6} = c^{\{S\}\times 2}_{10}\alpha_6,\ \Omega^{\{1\}}_{10,7} = c^{\{S\}\times 2}_{10}\alpha_7 \\
     \Omega^{\{1\}}_{11,1} &= -\left(\Omega^{\{1\}}_{11,8}+\Omega^{\{1\}}_{11,9}+\Omega^{\{1\}}_{11,10}\right),\ \Omega^{\{1\}}_{11,8} = \alpha_8,\ \Omega^{\{1\}}_{11,9} = \alpha_9,\ \Omega^{\{1\}}_{11,10} = \alpha_{10}
\end{split}
\end{equation}
where
\begin{equation}
\begin{split}
    \alpha_2 &= \frac{1}{c^{\{S\}}_2},\ \alpha_3 =\frac{c^{\{S\}}_3}{c^{\{S\}}_3(c^{\{S\}}_4-c^{\{S\}}_3)},\ \alpha_4=\frac{c^{\{S\}}_4}{c^{\{S\}}_4(c^{\{S\}}_3-c^{\{S\}}_4)} \\
    \alpha_5 &= \frac{c^{\{S\}}_6c^{\{S\}}_7}{c^{\{S\}}_5(c^{\{S\}}_5-c^{\{S\}}_6)(c^{\{S\}}_5-c^{\{S\}}_7)},\ \alpha_6 = \frac{c^{\{S\}}_5c^{\{S\}}_7}{c^{\{S\}}_6(c^{\{S\}}_6-c^{\{S\}}_5)(c^{\{S\}}_6-c^{\{S\}}_7)},\\
    \alpha_7 &= \frac{c^{\{S\}}_5c^{\{S\}}_6}{c^{\{S\}}_7(c^{\{S\}}_7-c^{\{S\}}_5)(c^{\{S\}}_7-c^{\{S\}}_6)},\ \alpha_8=\frac{c^{\{S\}}_9c^{\{S\}}_{10}}{c^{\{S\}}_8(c^{\{S\}}_8-c^{\{S\}}_9)(c^{\{S\}}_8-c^{\{S\}}_{10})} \\
    \alpha_9&=\frac{c^{\{S\}}_8c^{\{S\}}_{10}}{c^{\{S\}}_9(c^{\{S\}}_9-c^{\{S\}}_8)(c^{\{S\}}_9-c^{\{S\}}_{10})},\ \alpha_{10}=\frac{c^{\{S\}}_8c^{\{S\}}_9}{c^{\{S\}}_{10}(c^{\{S\}}_{10}-c^{\{S\}}_8)(c^{\{S\}}_{10}-c^{\{S\}}_{9})}
\end{split}
\end{equation}

\begin{equation}
\begin{split}
    \Omega^{\{2\}}_{5,1} &= -\left(\Omega^{\{2\}}_{5,3}+\Omega^{\{2\}}_{5,4}\right),\  \Omega^{\{2\}}_{5,3} = c^{\{S\}\times 2}_5\beta_3,\  \Omega^{\{2\}}_{5,4} =-c^{\{S\}\times 2}_5\beta_4 \\
    \Omega^{\{2\}}_{6,1} &= -\left(\Omega^{\{2\}}_{6,3}+\Omega^{\{2\}}_{6,4}\right),\  \Omega^{\{2\}}_{6,3} = c^{\{S\}\times 2}_6\beta_3,\  \Omega^{\{2\}}_{6,4} =-c^{\{S\}\times 2}_6\beta_4 \\
    \Omega^{\{2\}}_{7,1} &= -\left(\Omega^{\{2\}}_{7,3}+\Omega^{\{2\}}_{7,4}\right),\  \Omega^{\{2\}}_{7,3} = c^{\{S\}\times 2}_7\beta_3,\  \Omega^{\{2\}}_{7,4} =-c^{\{S\}\times 2}_7\beta_4 \\
    \Omega^{\{2\}}_{8,1} &= -\left(\Omega^{\{2\}}_{8,5}+\Omega^{\{2\}}_{8,6}+\Omega^{\{2\}}_{8,7}\right),\ \Omega^{\{2\}}_{8,5} = -c^{\{S\}\times 2}_8\beta_5,\ \Omega^{\{2\}}_{8,6} = -c^{\{S\}\times 2}_8\beta_6,\ \Omega^{\{2\}}_{8,7} = -c^{\{S\}\times 2}_8\beta_7 \\
    \Omega^{\{2\}}_{9,1} &= -\left(\Omega^{\{2\}}_{9,5}+\Omega^{\{2\}}_{9,6}+\Omega^{\{2\}}_{9,7}\right),\ \Omega^{\{2\}}_{9,5} = -c^{\{S\}\times 2}_9\beta_5,\ \Omega^{\{2\}}_{9,6} = -c^{\{S\}\times 2}_9\beta_6,\ \Omega^{\{2\}}_{9,7} = -c^{\{S\}\times 2}_9\beta_7 \\
    \Omega^{\{2\}}_{10,1} &= -\left(\Omega^{\{2\}}_{10,5}+\Omega^{\{2\}}_{10,6}+\Omega^{\{2\}}_{10,7}\right),\ \Omega^{\{2\}}_{10,5} = -c^{\{S\}\times 2}_{10}\beta_5,\ \Omega^{\{2\}}_{10,6} = -c^{\{S\}\times 2}_{10}\beta_6,\ \Omega^{\{2\}}_{10,7} = -c^{\{S\}\times 2}_{10}\beta_7 \\
     \Omega^{\{2\}}_{11,1} &= -\left(\Omega^{\{2\}}_{11,8}+\Omega^{\{2\}}_{11,9}+\Omega^{\{2\}}_{11,10}\right),\ \Omega^{\{2\}}_{11,8} = \beta_8,\ \Omega^{\{2\}}_{11,9} = \beta_9,\ \Omega^{\{2\}}_{11,10} = \beta_{10}
\end{split}
\end{equation}
where
\begin{equation}
\begin{split}
    \beta_3&=\frac{1}{c^{\{S\}}_3(c^{\{S\}}_4-c^{\{S\}}_3)},\ \beta_4=\frac{1}{c^{\{S\}}_4(c^{\{S\}}_3-c^{\{S\}}_4)} \\
    \beta_5 &= \frac{c^{\{S\}}_6+c^{\{S\}}_7}{c^{\{S\}}_5(c^{\{S\}}_5-c^{\{S\}}_6)(c^{\{S\}}_5-c^{\{S\}}_7)},\ \beta_6 = \frac{c^{\{S\}}_5+c^{\{S\}}_7}{c^{\{S\}}_6(c^{\{S\}}_6-c^{\{S\}}_5)(c^{\{S\}}_6-c^{\{S\}}_7)},\\
    \beta_7 &= \frac{c^{\{S\}}_5+c^{\{S\}}_6}{c^{\{S\}}_7(c^{\{S\}}_7-c^{\{S\}}_5)(c^{\{S\}}_7-c^{\{S\}}_6)},\ \beta_8=\frac{c^{\{S\}}_9+c^{\{S\}}_{10}}{c^{\{S\}}_8(c^{\{S\}}_8-c^{\{S\}}_9)(c^{\{S\}}_8-c^{\{S\}}_{10})} \\
    \beta_9&=\frac{c^{\{S\}}_8+c^{\{S\}}_{10}}{c^{\{S\}}_9(c^{\{S\}}_9-c^{\{S\}}_8)(c^{\{S\}}_9-c^{\{S\}}_{10})},\ \beta_{10}=\frac{c^{\{S\}}_8+c^{\{S\}}_9}{c^{\{S\}}_{10}(c^{\{S\}}_{10}-c^{\{S\}}_8)(c^{\{S\}}_{10}-c^{\{S\}}_{9})}
\end{split}
\end{equation}

\begin{equation}
\begin{split}
    \Omega^{\{3\}}_{8,1} &= -\left(\Omega^{\{3\}}_{8,5}+\Omega^{\{3\}}_{8,6}+\Omega^{\{3\}}_{8,7}\right),\ \Omega^{\{3\}}_{8,5} = c^{\{S\}\times 2}_8\gamma_5,\ \Omega^{\{3\}}_{8,6} = c^{\{S\}\times 2}_8\gamma_6,\ \Omega^{\{3\}}_{8,7} = c^{\{S\}\times 2}_8\gamma_7 \\
    \Omega^{\{3\}}_{9,1} &= -\left(\Omega^{\{3\}}_{9,5}+\Omega^{\{3\}}_{9,6}+\Omega^{\{3\}}_{9,7}\right),\ \Omega^{\{3\}}_{9,5} = c^{\{S\}\times 2}_9\gamma_5,\ \Omega^{\{3\}}_{9,6} = c^{\{S\}\times 2}_9\gamma_6,\ \Omega^{\{3\}}_{9,7} = c^{\{S\}\times 2}_9\gamma_7 \\
    \Omega^{\{3\}}_{10,1} &= -\left(\Omega^{\{3\}}_{10,5}+\Omega^{\{3\}}_{10,6}+\Omega^{\{3\}}_{10,7}\right),\ \Omega^{\{3\}}_{10,5} = c^{\{S\}\times 2}_{10}\gamma_5,\ \Omega^{\{3\}}_{10,6} = c^{\{S\}\times 2}_{10}\gamma_6,\ \Omega^{\{3\}}_{10,7} = c^{\{S\}\times 2}_{10}\gamma_7 \\
     \Omega^{\{3\}}_{11,1} &= -\left(\Omega^{\{3\}}_{11,8}+\Omega^{\{3\}}_{11,9}+\Omega^{\{3\}}_{11,10}\right),\ \Omega^{\{3\}}_{11,8} = \gamma_8,\ \Omega^{\{3\}}_{11,9} = \gamma_9,\ \Omega^{\{3\}}_{11,10} = \gamma_{10}
\end{split}
\end{equation}
where
\begin{equation}
\begin{split}
    \gamma_5 &= \frac{1}{c^{\{S\}}_5(c^{\{S\}}_5-c^{\{S\}}_6)(c^{\{S\}}_5-c^{\{S\}}_7)},\ \gamma_6 = \frac{1}{c^{\{S\}}_6(c^{\{S\}}_6-c^{\{S\}}_5)(c^{\{S\}}_6-c^{\{S\}}_7)},\\
    \gamma_7 &= \frac{1}{c^{\{S\}}_7(c^{\{S\}}_7-c^{\{S\}}_5)(c^{\{S\}}_7-c^{\{S\}}_6)},\ \gamma_8=\frac{1}{c^{\{S\}}_8(c^{\{S\}}_8-c^{\{S\}}_9)(c^{\{S\}}_8-c^{\{S\}}_{10})} \\
    \gamma_9&=\frac{1}{c^{\{S\}}_9(c^{\{S\}}_9-c^{\{S\}}_8)(c^{\{S\}}_9-c^{\{S\}}_{10})},\ \gamma_{10}=\frac{1}{c^{\{S\}}_{10}(c^{\{S\}}_{10}-c^{\{S\}}_8)(c^{\{S\}}_{10}-c^{\{S\}}_{9})}
\end{split}
\end{equation}

This method satisfies all available coupling conditions (up through fourth-order) and its base method satisfies all order conditions up through fifth-order when
\begin{align}
    c^{\{S\}}_9&=\frac{12-15c^{\{S\}}_{10}-15c^{\{S\}}_8+20 c^{\{S\}}_{10} c^{\{S\}}_8}{15-20c^{\{S\}}_{10}-20c^{\{S\}}_8+30c^{\{S\}}_{10}c^{\{S\}}_8}.
\end{align}

\end{document}